\newcommand{\sumprime}{\if@display\sideset{}{'}\sum%
            \else\sum'\fi}
\begin{document}

\numberwithin{equation}{section}

\newtheorem{theorem}{Theorem}[section]
\newtheorem{proposition}[theorem]{Proposition}
\newtheorem{conjecture}[theorem]{Conjecture}
\def\theconjecture{\unskip}
\newtheorem{corollary}[theorem]{Corollary}
\newtheorem{lemma}[theorem]{Lemma}
\newtheorem{observation}[theorem]{Observation}
\newtheorem{definition}{Definition}
\numberwithin{definition}{section} 
\newtheorem{remark}{Remark}
\def\theremark{\unskip}
\newtheorem{kl}{Key Lemma}
\def\thekl{\unskip}
\newtheorem{question}{Question}
\def\thequestion{\unskip}
\newtheorem{example}{Example}
\def\theexample{\unskip}
\newtheorem{problem}{Problem}
\newtheorem*{problem*}{Problem}

\thanks{Supported by National Natural Science Foundation of China, No. 12271101}

\address [Yuanpu Xiong] {Department of Mathematical Sciences, Fudan University, Shanghai, 200433, China}
\email{ypxiong18@fudan.edu.cn}

\address [Zhiyuan Zheng] {Department of Mathematical and Computer Sciences, Tongling University, Anhui, 244000, China}
\email{2023052@tlu.edu.cn}

\title{Bergman functions on weakly uniformly perfect domains}
\author{Yuanpu Xiong and Zhiyuan Zheng}

\date{}

\begin{abstract}
We contruct two classes of Zalcman-type domains, on which the Bergman functions have certain pre-described boundary behaviors. Such examples also lead to generalizations of uniformly perfectness in the sense of Pommerenke. These weakly uniformly perfect conditions can be characterized in terms of the logarithm capacity. We obtain lower estimates for the boundary behaviors of Bergman kernel functions on such domains.
\end{abstract}

\maketitle

\tableofcontents

\section{Introduction}
A bounded domain $\Omega\subset\mathbb{C}^n$ is said to be Bergman exhaustive if the Bergman kernel function $K_\Omega(z)$ is exhaustive, while it is called Bergman complete if the Bergman metric is complete, i.e., the Bergman distance $d_\Omega$ is complete. The exhaustiveness and completeness are two central topics in the study of Bergman functions (i.e., the Bergman kernel, metric and distance). There is a large literature in these directions (see, e.g., \cite{Ohsawa84, JP, JPZ, Zwonek99, Chen99, BP98, Herbort, PflugZwonek05}). In particular, it is known that if $\Omega$ is hyperconvex, then it is Bergman exhaustive and Bergman complete.

One can also study Bergman exhaustiveness and completeness quantitatively. For example, there are many lower estimates for the Bergman kernel implying Bergman exhaustiveness in different settings. After some early works of Diederich \cite{Diederich70,Diederich73} and Catlin \cite{Catlin84,Catlin89}, Diederich-Ohsawa \cite{DO95} obtained an effective estimate concerning Bergman completeness on a bounded pseudoconvex domain $\Omega$ with $C^2$ boundary. They showed that
\begin{equation}\label{eq:distance_estimate_1}
d_\Omega(z,z_0)\gtrsim\log\log\frac{1}{\delta_\Omega(z)},\ \ \ z\rightarrow\partial\Omega,
\end{equation}
where $z_0\in\Omega$ is fixed, and $\delta_\Omega(z)$ is the Euclidean distance from $z\in\Omega$ to $\partial\Omega$. Their result is actually proved in a larger class of bounded pseudoconvex domains. B{\l}ocki \cite{Blocki05} improved the estimate \eqref{eq:distance_estimate_1} to
\begin{equation}\label{eq:distance_estimate_2}
d_\Omega(z,z_0)\gtrsim\frac{\log\frac{1}{\delta_\Omega(z)}}{\log\log\frac{1}{\delta_\Omega(z)}},\ \ \ z\rightarrow\partial\Omega
\end{equation}
under a slightly stronger condition (which is also satisfied by bounded pseudoconvex domains with $C^2$ boundaries). We also refer the reader to \cite{Ohsawa99, Herbort16, Chen17} for some applications and generalizations. The estimates \eqref{eq:distance_estimate_1} and \eqref{eq:distance_estimate_2} are not yet known to be sharp in general. But for a planar domain with $C^2$ boundary, we see from Diederich's works \cite{Diederich70,Diederich73} that the sharp boundary behavior is
\[
d_\Omega(z,z_0)\asymp\log\frac{1}{\delta_\Omega(z)}.
\]

The goal of this paper is to show that both the Diederich-Ohsawa type estimate and the B{\l}ocki type estimate could really exist for some planar domains. That is, to construct certain bounded domain whose Bergman distance has the pre-described boundary behavior in \eqref{eq:distance_estimate_1} or \eqref{eq:distance_estimate_2}. Let us consider the Zalcman-type domain (cf. \cite{Zalcman})
\begin{equation}\label{eq:Zalcman}
\Omega := D(0,1)\setminus\left(\bigcup_{k=1}^{\infty}\overline{D(x_k,r_k)}\cup \{0\}\right).
\end{equation}
Here, $x_k\in(0,1)$ and $0<r_k\ll{x_k}$ so that the discs $D(x_k,r_k)$ are pairwise disjoint. In this paper, we set
\begin{equation}\label{eq:Zalcman_h}
r_k=x_{k+1}=h(x_k)\ \ \ \text{and}\ \ \ 0<x_1\ll1,
\end{equation}
where $h:(0,\varepsilon_0)\rightarrow(0,\infty)$ is an increasing function with $h(r)=o(r)$ as $r\rightarrow0+$, and $\varepsilon_0>0$ is some constant. We mainly consider the following two types of $h$:
\begin{itemize}
\item[$(1)$] $h_{1,\alpha}(r)=r^\alpha$,
\item[$(2)$] $h_{2,\beta}(r)=r(\log(1/r))^{-\beta}$.
\end{itemize}
The following result implies that the boundary behaviors of the Bergman distances in the estimates \eqref{eq:distance_estimate_1} and \eqref{eq:distance_estimate_2} can be fulfilled with these choices of $h$.
\begin{theorem}\label{thm2}
Let $0<x\ll1$.
\begin{itemize}
\item[$(1)$]
If $h(x)=h_{1,\alpha}(x)$, then
\[
K_{\Omega}(-x) \asymp  \frac{1}{x^{2}\log \frac{1}{x}},\ \ \ d_{\Omega}(-x_1,-x)\asymp \log \log \frac{1}{x}.
\]

\item[$(2)$]
If $h(x)=h_{2,\beta}(x)$, then
\[
K_{\Omega}(-x) \asymp \frac{1}{x^2\log \log \frac{1}{x}},\ \ \ d_{\Omega}(-x_1,-x) \asymp \frac{\log \frac{1}{x}}{\log\log \frac{1}{x}}.
\]
\end{itemize}
\end{theorem}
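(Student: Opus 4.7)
All four estimates reduce to (i) asymptotics of the Suita capacity $c_\beta^\Omega(-x)$ and (ii) integration of the Bergman metric $\beta_\Omega$ along radial segments toward $0$; the two choices of $h$ enter only through the capacity asymptotics.

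\textbf{Kernel bounds.} For the lower bound on $K_\Omega(-x)$ I invoke the B{\l}ocki--Zwonek proof of Suita's conjecture $\pi K_\Omega(z)\ge c_\beta^\Omega(z)^2$ and compute $c_\beta^\Omega(-x)$ by a scale-by-scale estimation of the Robin constant. The Cantor-like cluster $\{\overline{D(x_k,r_k)}\}$ accumulating at $0$ provides the main contribution: for $h_{1,\alpha}$ the cluster density yields $c_\beta^\Omega(-x)^2\asymp 1/(x^2\log(1/x))$, while for $h_{2,\beta}$ the slower decay gives $c_\beta^\Omega(-x)^2\asymp 1/(x^2\log\log(1/x))$. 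The matching upper bound on $K_\Omega$ follows from the extremal characterization $K_\Omega(-x)=\sup\{|f(-x)|^2:\|f\|_{A^2(\Omega)}\le 1\}$: for any admissible $f$, Cauchy integrals on the boundary circles $\partial D(x_k,r_k)$ at the relevant scales force $\|f\|_{A^2}^2$ to exceed $x^2\log(1/x)$ (resp.\ $x^2\log\log(1/x)$), which is exactly dual to the capacity estimate.

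\textbf{Distance bounds.} With the kernel asymptotics in hand, differentiating $\log K_\Omega(-t)$ (combined with a rotational comparison $K_\Omega(-t)\asymp K_{\widetilde\Omega}(-t)$ for an annular envelope $\widetilde\Omega$) gives $\beta_\Omega(-t)\asymp 1/(t\log(1/t))$ in case (1) and $\beta_\Omega(-t)\asymp 1/(t\log\log(1/t))$ in case (2). Integrating along $[-x_1,-x]$ with the substitution $u=\log(1/t)$ converts these to $\int du/u\asymp\log\log(1/x)$ and $\int du/\log u\asymp \log(1/x)/\log\log(1/x)$, yielding the upper bound on $d_\Omega(-x_1,-x)$.

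\textbf{Main obstacle.} The hardest step is the matching \emph{lower} bound on the Bergman distance, since a priori a curve skirting the cluster of holes could be shorter than the radial segment. I would handle this by establishing that $\beta_\Omega$ satisfies the same asymptotic lower bound throughout a whole annular neighborhood of $0$ (by rotational symmetrization of the capacity estimate, exploiting that the $x_k$ cluster has no preferred direction from the viewpoint of the $\beta_\Omega$ scale) and by a Schwarz-lemma argument showing that any excursion outside this annulus contributes only a uniform additive constant to the Bergman length, which is negligible at the scales $\log\log(1/x)$ and $\log(1/x)/\log\log(1/x)$.
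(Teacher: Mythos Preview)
Your kernel strategy is broadly on target (and the paper itself uses capacity-type arguments for the lower bound and a Cauchy-integral/cut-off argument for the upper bound), but the distance argument has a genuine gap at the metric step.

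The assertion $\beta_\Omega(-t)\asymp 1/(t\log(1/t))$ in case~(1) (and the analogue in case~(2)) is \emph{false}, and the method you propose for obtaining it cannot work. First, ``differentiating $\log K_\Omega(-t)$'' along the real axis does not produce the Bergman metric: $b_\Omega^2=\partial_z\partial_{\bar z}\log K_\Omega$ requires the full complex Hessian, and knowing only the radial asymptotic $K_\Omega(-t)\asymp 1/(t^2\log(1/t))$ gives no control on the transverse second derivative. Even formally, the dominant term in $\partial_t\log K_\Omega(-t)$ would be $-2/t$, not $1/(t\log(1/t))$. Second, the ``rotational comparison'' is not available: $\Omega$ is highly non-symmetric (all the removed discs sit on the positive real axis), and there is no annular envelope $\widetilde\Omega$ with $K_\Omega\asymp K_{\widetilde\Omega}$ near the negative axis. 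Third, the claimed asymptotic is contradicted by direct computation: for $x$ just below $x_k$ one has $b_\Omega(-x)\asymp 1/x_k$, which is strictly larger than your $1/(x_k\log(1/x_k))$; the metric oscillates across each shell $(x_{k+1},x_k)$ rather than following a single smooth profile.

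What the paper does instead is to estimate the numerator $\sup\{|f'(-x)|:f(-x)=0,\ \|f\|_{L^2}=1\}$ directly. For the upper bound on $b_\Omega(-x)$ it uses the Cauchy integral representation for $f'$ on an auxiliary domain $\Omega_k$, together with the vanishing $f(-x)=0$ to replace the kernel $1/(z+x)^2$ by $z/(x(z+x)^2)$; this yields the shell-wise bound $b_\Omega(-x)\lesssim x_{k+1}/x^2+1/x_k$, which integrates over $(x_{k+1},x_k)$ to $O(1)$ and then sums to the correct order. For the lower bound on $b_\Omega$ it builds an explicit test function (a linear combination of $1/(z-x_k)$, $1/(z-x_{k\pm1})$) vanishing at $w$, and shows $b_\Omega(w)\gtrsim 1/x_k$ on the full annulus $x_k/3<|w|<2x_k/3$; since any curve from $-x_1$ to $-x$ must cross $\asymp k$ such annuli, each contributing length $\gtrsim 1$, this gives the distance lower bound without any symmetrization. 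Your annulus-crossing idea for the lower bound is the right one, but it needs this explicit test-function input, not a capacity symmetrization.
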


\begin{remark}
{\rm As a simple consequence of Wiener's criterion (cf. \cite[Theorem 5.4.1]{Ransford}), the domains in Theorem \ref{thm2} are hyperconvex, so that they are Bergman complete.}
\end{remark}

The Zalcman-type domains are very useful to construct examples in the study of Bergman exhaustiveness and completeness (see, e.g., \cite{Chen99, Zwonek01, Jucha}). Indeed, Theorem \ref{thm2} is largely inspired by the work of Jucha \cite{Jucha}, where various techniques are applied to study the lower and upper bound for the Bergman functions. It is usually easier to obtain sharp estimates for the Bergman functions on one-dimensional domains, since it is easier to construct holomorphic functions and some integral representation (such as Cauchy's integral formula) can be applied. It would be a challenging question to consider the analogue of Theorem \ref{thm2} in high dimensions, i.e., to construct nontrivial examples in $\mathbb{C}^n$ such that the estimates \eqref{eq:distance_estimate_1} and \eqref{eq:distance_estimate_2} are sharp.

We also want to find some generalities in the examples in Theorem \ref{thm2}. It seems that they can be related to domains with uniformly perfect boundaries, a concept introduced by Pommerenke \cite{Pommerenke}. Let us consider the following generalization. In what follows, we always assume that $\Omega\subset\mathbb{C}$ and $\Omega\neq\mathbb{C}$ ($\Omega$ is not necessarily bounded).

\begin{definition}\label{def:h}
Let $h:(0,\varepsilon_0)\rightarrow(0,\infty)$ be an increasing function with $h(r)=o(r)$ as $r\rightarrow0+$, where $\varepsilon_0>0$. A domain $\Omega\subset\mathbb{C}$ is said to have $h-$uniformly perfect boundary or weakly uniformly perfect boundary if there exists some $c,r_0>0$ such that
\[
\{z\in\mathbb{C};\ c\cdot{h(r)} \leq |z-a| \leq r\}\cap\partial\Omega\neq\emptyset.
\]
\end{definition}

We say $\Omega$ satisfies the condition $(U)_{1,\alpha}$ or $(U)_{2,\beta}$ if it has $h_{1,\alpha}$ or $h_{2,\beta}-$uniformly perfect boundaries, respectively. It follows that

\begin{proposition}\label{prop:Zalcman_weak_uniformly_perfect}
\begin{itemize}
\item[$(1)$]
The Zalcman-type domain in Theorem \ref{thm2}/(1) satisfies the condition $(U)_{1,\alpha}$, but does not satisfy the condition $(U)_{1,\alpha-\varepsilon}$ for every $\varepsilon>0$.

\item[$(2)$]
The Zalcman-type domain in Theorem \ref{thm2}/(2) satisfies the condition $(U)_{2,\beta}$, but does not satisfy the condition $(U)_{2,\beta-\varepsilon}$ for every $\varepsilon>0$.
\end{itemize}
\end{proposition}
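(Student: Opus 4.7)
The plan is to analyze the structure of $\partial\Omega$ and verify the annular condition from Definition \ref{def:h} by a case analysis on the type of boundary point $a$. We have
\[
\partial\Omega = \partial D(0,1) \cup \{0\} \cup \bigcup_{k=1}^{\infty}\partial D(x_k, r_k),
\]
and the recursion $r_k = x_{k+1} = h(x_k) = o(x_k)$ forces $x_k \downarrow 0$ very rapidly. The key observation is that for $a \in \partial D(x_k, r_k)$ the only boundary points within Euclidean distance $2r_k$ of $a$ lie on $\partial D(x_k, r_k)$ itself: the point $0$ is at distance $|a| \in [x_k - r_k, x_k + r_k]$, each $\partial D(x_j, r_j)$ with $j \ne k$ sits at distance of order $\max(x_j, x_k) \gg r_k$, and $\partial D(0,1)$ is at distance $\approx 1$.

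For the positive direction I would fix $c = 1/2$ and a small $r_0$ and verify the condition case by case. The case $a \in \partial D(0,1)$ is immediate since the unit circle realises every distance in $(0,2)$ from $a$. For $a = 0$, picking the smallest $k$ with $x_k \le r$ yields $r < x_{k-1}$, hence $h(r) \le h(x_{k-1}) = x_k$, and the point $x_k - r_k \in \partial D(x_k, r_k)$ lies at distance $x_k - r_k \in [c\,h(r), r]$ once $r_k/x_k$ is small. For $a \in \partial D(x_k, r_k)$ I would split on $r$: if $r \le 2r_k$, use the point of $\partial D(x_k, r_k)$ at distance exactly $r$ from $a$; if $r \in (2r_k,\, x_k - r_k)$, use the antipodal point on $\partial D(x_k, r_k)$, whose distance $2r_k$ beats $c\,h(r) \le c\,h(x_k) = c\,r_k$; and if $r \ge x_k - r_k$, pick the largest $j \le k$ with $x_j \le r$ and use a point of $\partial D(x_j, r_j)$ (or $\{0\}$ when $j = k$), comparing $c\,h(r) \le c\,h(x_{j-1}) = c\,x_j$ against the distance $\approx x_j$ from $a$.

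For the negative direction, fix $\varepsilon > 0$ and any $c > 0$, and write $\tilde h$ for $h_{1,\alpha-\varepsilon}$ or $h_{2,\beta-\varepsilon}$. Take $a_k = x_k - r_k \in \partial\Omega$ and $r_k^{*} = x_k - 2r_k$; since $r_k/x_k \to 0$, one has $r_k^{*} \in (2r_k,\, x_k - r_k)$ for large $k$. In this range the only boundary points at distance $\le r_k^{*}$ from $a_k$ lie on $\partial D(x_k, r_k)$, so at distance $\le 2r_k$ from $a_k$. Consequently, the annulus $\{c\,\tilde h(r_k^{*}) \le |z - a_k| \le r_k^{*}\}$ is disjoint from $\partial\Omega$ as soon as $c\,\tilde h(r_k^{*}) > 2r_k$. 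A direct calculation gives $\tilde h(x_k)/r_k \to \infty$: in case (1) the ratio is $x_k^{\alpha-\varepsilon}/x_k^{\alpha} = x_k^{-\varepsilon}$, and in case (2) it is $(\log(1/x_k))^{\varepsilon}$. Hence for any $c$ the inequality holds for all large $k$, violating $(U)_{1,\alpha-\varepsilon}$ or $(U)_{2,\beta-\varepsilon}$.

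The main technical obstacle is the positive direction, where a single pair $(c, r_0)$ must serve uniformly across all three regimes $r \le 2r_k$, $r \in (2r_k,\, x_k - r_k)$, and $r \ge x_k - r_k$; the decisive comparison in each regime is $h(r) \le h(x_j) = r_j$ applied at the appropriate index. Once this is in place, the negative direction is essentially forced by the scaling ratio $\tilde h(x_k)/h(x_k) \to \infty$.
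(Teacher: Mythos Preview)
Your positive direction follows essentially the same case analysis as the paper: split on whether $a=0$, $a\in\partial D(0,1)$, or $a\in\partial D(x_k,r_k)$, and in the last case split again on the size of $r$, using the decisive comparison $h(r)\le h(x_{j-1})=x_j$ at the appropriate index. (One slip: ``largest $j\le k$ with $x_j\le r$'' should be ``smallest $j$ with $x_j\le r$'', since $x_j$ is decreasing; as written $j=k$ always.) Modulo such bookkeeping, this part is fine.

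The negative direction, however, has a genuine gap. With $a_k=x_k-r_k$ and $r_k^{*}=x_k-2r_k$, the closed disc $\overline{D(a_k,r_k^{*})}$ reaches down to the real point $a_k-r_k^{*}=r_k=x_{k+1}$, so it \emph{does} meet $\partial D(x_{k+1},r_{k+1})$: the point $x_{k+1}+r_{k+1}\in\partial\Omega$ satisfies
\[
|a_k-(x_{k+1}+r_{k+1})|=x_k-2r_k-r_{k+1}=r_k^{*}-r_{k+1}<r_k^{*},
\]
and since $c\,\tilde h(r_k^{*})=o(x_k)$ while $r_k^{*}-r_{k+1}\sim x_k$, this boundary point lies in your annulus $\{c\,\tilde h(r_k^{*})\le|z-a_k|\le r_k^{*}\}$ for every $c>0$ and all large $k$. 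Thus your claim that the only boundary points within distance $r_k^{*}$ of $a_k$ lie on $\partial D(x_k,r_k)$ is false, and the annulus is never empty. The fix is easy: either shrink $r_k^{*}$ to (say) $x_k/2$, so the disc stops well short of $x_{k+1}$, or---as the paper does---take $a=0$ and $r=x_k/2$, for which the annulus $\{c\,\tilde h(x_k/2)\le|z|\le x_k/2\}$ misses every $\partial D(x_j,r_j)$ with $j\ge k+1$ because those lie in $\{|z|<2x_{k+1}\}$ and $\tilde h(x_k)/x_{k+1}\to\infty$.
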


We will prove Proposition \ref{prop:Zalcman_weak_uniformly_perfect} in \S\,\ref{sec:weak_uniform_perfect}.

If $\Omega$ is bounded and we take $h(r)=r$ in Definition \ref{def:h} , then $\partial\Omega$ is uniformly perfect. Uniformly perfectness is deeply connected with many questions in complex analysis, dynamics and geometry (cf. \cite{Fernandez,Jarvi,Lithner,Osgood,Rocha,JianhuaZheng}, etc.). Pommerenke also showed that uniformly perfectness can be characterized by capacity conditions. Following his idea, let us consider the following condition on $\partial\Omega$:
\begin{itemize}
\item[$(C)_h$:] There exist constants $C,r_0>0$, such that
\[
\mathrm{Cap}(\overline{D}(a,r)\setminus\Omega)\geq{C\cdot{h(r)}}
\]
for all $a\in\partial\Omega$ and $r\in(0,r_0)$.
\end{itemize}
For simplicity, we denote $(C)_{h_{1,\alpha}}$ and $(C)_{h_{2,\beta}}$ by $(C)_{1,\alpha}$ and $(C)_{2,\beta}$, respectively. Pommerenke proved in \cite{Pommerenke} that a domain $\Omega$ has uniformly perfect boundary if and only if the condition $(C)_h$ holds with $h(r)=r$.

As for weakly uniformly perfectness, we have the following relationships between the conditions $(U)_{1,\alpha}$, $(C)_{1,\alpha}$, $(U)_{2,\beta}$ and $(C)_{2,\beta}$.

\begin{theorem}\label{th:weakly_uniform_perfect}
Let $\Omega$ be a domain in $\mathbb{C}$.

\begin{itemize}
\item[$(1)$] $(C)_{1,\alpha}\Rightarrow(U)_{1,\alpha}$. Conversely, if $1<\alpha<2$, then $(U)_{1,\alpha}\Rightarrow(C)_{1,(2-\alpha)^{-1}}$.

\item[$(2)$] $(U)_{2,\beta}\Leftrightarrow(C)_{2,\beta}$.
\end{itemize}
\end{theorem}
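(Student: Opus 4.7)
The plan is to treat the four implications in two groups. For the forward directions $(C)_{1,\alpha}\Rightarrow(U)_{1,\alpha}$ and $(C)_{2,\beta}\Rightarrow(U)_{2,\beta}$, I would argue by contraposition, using only monotonicity of capacity together with the connectedness of $\Omega$. Suppose that for some constant $c>0$ one can find $a\in\partial\Omega$ and $r\in(0,r_0)$ with the annulus $A_{a,r}:=\{z\in\mathbb{C}:\,c\cdot h(r)\le|z-a|\le r\}$ disjoint from $\partial\Omega$. Since $\Omega$ is connected and $r$ is smaller than its diameter, $\Omega$ must contain points both arbitrarily close to $a$ (as $a\in\partial\Omega$) and outside $\overline{D}(a,r)$; any path in $\Omega$ joining two such points intersects $A_{a,r}$, so $A_{a,r}\cap\Omega\ne\emptyset$. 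Being connected and avoiding $\partial\Omega$, $A_{a,r}$ then lies entirely in $\Omega$, which gives $\overline{D}(a,r)\setminus\Omega\subset\overline{D}(a,c\cdot h(r))$ and, by monotonicity, $\mathrm{Cap}(\overline{D}(a,r)\setminus\Omega)\le c\cdot h(r)$. Choosing $c$ strictly less than the constant in $(C)_h$ contradicts $(C)_h$, establishing both forward implications uniformly.

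For the reverse implications $(U)_{1,\alpha}\Rightarrow(C)_{1,(2-\alpha)^{-1}}$ (with $1<\alpha<2$) and $(U)_{2,\beta}\Rightarrow(C)_{2,\beta}$, my plan is to construct a Cantor-type subset $K\subset\partial\Omega\cap\overline{D}(a,r)$ by iterating $(U)_h$ and then lower-bound $\mathrm{Cap}(K)$ via a logarithmic-energy estimate. Starting from $a_0=a$, $r_0=r$, at each step I apply $(U)_h$ at every previously constructed boundary point, using sub-radii comparable to $h(r_n)$, to produce $N$ children per node. This yields a rooted tree of boundary points $\{a_{i_1\cdots i_n}\}$ with $r_{n+1}\asymp h(r_n)$ and descendants at level $n$ mutually separated by $\gtrsim r_n$; the branching number $N$ is a parameter to be optimized against the achievable branching of $(U)_h$.

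Placing the natural balanced probability measure $\mu$ on $K$ with mass $N^{-n}$ on each level-$n$ cluster, and decomposing the logarithmic energy
\[
I(\mu)=\iint\log\frac{1}{|z-w|}\,d\mu(z)\,d\mu(w)
\]
according to the finest level at which $z$ and $w$ share a common ancestor, the integrand on each block at level $n$ is bounded by $\log(1/r_n)$, giving the telescoping estimate $I(\mu)\lesssim\sum_{n\ge 0}N^{-n}\log(1/r_n)$. For $h=h_{2,\beta}$, $r_n$ decreases only by logarithmic factors, binary branching $N=2$ suffices, and Frostman's inequality $\mathrm{Cap}(K)\ge e^{-I(\mu)}$ yields $\mathrm{Cap}(K)\gtrsim r(\log(1/r))^{-\beta}$. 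For $h=h_{1,\alpha}$, $r_n\asymp r^{\alpha^n}$ and $\log(1/r_n)\asymp\alpha^n\log(1/r)$; the series converges only when $N>\alpha$, and optimizing $N$ gives the sharp capacity bound $\mathrm{Cap}(K)\gtrsim r^{1/(2-\alpha)}$, with the hypothesis $\alpha<2$ being precisely what makes the optimization feasible.

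The main obstacle will be the branching step in case (1): a single application of $(U)_{1,\alpha}$ produces only one boundary point per annulus, so realising $N$-fold branching requires iterating $(U)_{1,\alpha}$ at auxiliary sub-scales and at neighbouring base points, and it is the tension between the achievable branching $N$ and the super-geometric decay $r_n\asymp r^{\alpha^n}$ that forces the exponent in $(C)_{1,\gamma}$ to deteriorate from the naive value $\gamma=\alpha$ to $\gamma=(2-\alpha)^{-1}$. Once the correct branching is set up, the energy computation reduces to summing a convergent geometric-type series.
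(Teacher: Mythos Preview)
Your proposal is essentially correct and close in spirit to the paper's proof. The forward implications $(C)_h\Rightarrow(U)_h$ are handled identically (contraposition, monotonicity of capacity). For the reverse implications, both you and the paper build a Cantor-type subset of $\partial\Omega\cap\overline{D}(a,r)$ by iterating $(U)_h$; the paper then lower-bounds its capacity via the Fekete--Szeg\H{o} theorem (transfinite diameter of the finite sets $E_k$), whereas you propose the equivalent Frostman route of bounding the logarithmic energy of the balanced measure. These two devices are interchangeable and yield identical numerical bounds, so either works.

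Where you are overcomplicating matters is the branching. There is no need to ``optimize $N$'' or to manufacture $N$-fold branching from auxiliary sub-scales: the paper (following Pommerenke) takes a \emph{binary} tree in which one child of each node is the node itself and the other child is the single new boundary point supplied by $(U)_h$. With scales $s_{k+1}=\tfrac{c}{5}h(s_k)$, the level-$(k+1)$ siblings are separated by $\ge 5s_{k+1}$, and after further branching any two points with common level-$m$ ancestor but different level-$(m+1)$ ancestors remain separated by $\ge s_{m+1}$. The energy (equivalently Fekete--Szeg\H{o}) sum is then exactly
\[
\sum_{m\ge 0}2^{-(m+1)}\log\frac{1}{s_{m+1}}.
\]
For $h_{1,\alpha}$ one has $\log(1/s_{m+1})=\alpha^m\log(1/s_1)+O(\alpha^m)$, so the sum equals $\tfrac{1}{2-\alpha}\log(1/s_1)+O(1)$, giving $\mathrm{Cap}\gtrsim s_1^{1/(2-\alpha)}$ directly; for $h_{2,\beta}$ one gets $\log(1/s_1)+\beta\log\log(1/s_1)+O(1)$. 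So the exponent $1/(2-\alpha)$ is not a ``deterioration'' forced by a branching compromise --- it is simply the value of the geometric series $\sum_m(\alpha/2)^m$, and $\alpha<2$ is precisely its convergence condition. Once you see the one-child-equals-parent trick, the ``main obstacle'' you anticipated evaporates, and the rest of your energy argument goes through with $N=2$.
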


In contrast to Theorem \ref{th:weakly_uniform_perfect}, for every $\alpha\geq2$, there exists a Cantor-type set $\mathcal{C}$, with $\mathrm{Cap}(\mathcal{C})=0$ and $\Omega:=\mathbb{C}\setminus\mathcal{C}$ satisfying $(U)_{1,\alpha}$. Thus the condition $1<\alpha<2$ in Theorem \ref{th:weakly_uniform_perfect}/(1) cannot be removed. More details of the construction will be given in \S \ref{sec:weak_uniform_perfect}.

If a domain $\Omega$ satisfies $(U)_{1,\alpha}$ with $1<\alpha<2$, or $(U)_{2,\beta}$ with $\beta>0$, then $\mathbb{C}\setminus\Omega$ is non-polar. It follows from Carleson's theorem (cf. \cite{Carleson}) that $A^2(\Omega)\neq\{0\}$. Moreover, the Bergman kernel is strictly positive, and Bergman metric exists (see, e.g., \cite[Theorem 4]{BlockiZwonek}).

It would be an interesting question to study Bergman functions on these weakly uniformly perfect domains. In particular, it is natural to ask

\begin{problem}\label{prob}
If $\Omega$ satisfies the condition $(U)_{1,\alpha}$ for some $1<\alpha<2$ but does not satisfy the condition $(U)_{1,\alpha-\varepsilon}$ for any $\varepsilon>0$, then does there exists a sequence $\{z_k\}\subset\Omega$ with $z_k\rightarrow\partial\Omega$ as $k\rightarrow\infty$ and
\[
d_\Omega(z_k,z_0)\asymp\log\log\frac{1}{\delta_\Omega(z_k)}
\]
for some fixed $z_0$? One may raise a similar question for the B{\l}ocki estimate \eqref{eq:distance_estimate_2}.
\end{problem}

Theorem \ref{th:weakly_uniform_perfect} allows us to apply certain potential theoretical methods to study Bergman functions (cf. \cite{Zwonek, PflugZwonek, BlockiZwonek}). Inspired by the work of Pflug-Zwonek \cite{PflugZwonek}, we have the following lower estimate for the Bergman kernels on weakly uniformly perfect domains, which might be a first step to Problem \ref{prob}.

\begin{theorem}\label{th:Bergman_kernel}
Let $\Omega$ be a domain in $\mathbb{C}$, and $w\in\Omega$ sufficiently close to the boundary.

\begin{itemize}
\item[$(1)$]
If $\Omega$ satisfies the condition $(U)_{1,\alpha}$ for $1<\alpha<2$, then
\begin{equation}\label{eq:lower_1}
K_{\Omega}(w) \gtrsim \frac{1}{\delta_{\Omega}(w)^{2}\log \frac{1}{\delta_{\Omega}(w)}}.
\end{equation}

\item[$(2)$]
If $\Omega$ satisfies the condition $(U)_{2,\beta}$ for $\beta>0$, then
\begin{equation}\label{eq:lower_2}
K_{\Omega}(w) \gtrsim \frac{1}{\delta_\Omega(w)^2\log \log \frac{1}{\delta_\Omega(w)}}.
\end{equation}
\end{itemize}
\end{theorem}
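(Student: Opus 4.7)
The strategy, following Pflug-Zwonek \cite{PflugZwonek}, is to combine the capacity form of the hypothesis with an $L^2$-method for $\bar\partial$ in order to produce a test holomorphic function for the Bergman kernel at $w$.

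First I would apply Theorem \ref{th:weakly_uniform_perfect} to replace $(U)_{1,\alpha}$ (with $1<\alpha<2$) by $(C)_{1,(2-\alpha)^{-1}}$, and use $(U)_{2,\beta}\Leftrightarrow(C)_{2,\beta}$. Writing $h$ for the resulting profile, we may assume the uniform bound $\mathrm{Cap}(\overline{D}(a,r)\setminus\Omega)\gtrsim h(r)$ over all $a\in\partial\Omega$ and $r\in(0,r_0)$. Both conclusions of the theorem then follow from the single unified estimate
\[
K_\Omega(w)\;\gtrsim\;\frac{1}{\delta_\Omega(w)^2\,\log\bigl(\delta_\Omega(w)/h(\delta_\Omega(w))\bigr)},
\]
since $\log(\delta/h(\delta))\asymp\log(1/\delta)$ for $h=h_{1,\gamma}$ with $\gamma>1$, while $\log(\delta/h(\delta))\asymp\log\log(1/\delta)$ for $h=h_{2,\beta}$.

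To prove the unified estimate, set $\delta:=\delta_\Omega(w)$, choose $a\in\partial\Omega$ with $|w-a|=\delta$, and let $\chi$ be a smooth cut-off with $\chi(w)=1$ and $\mathrm{supp}\,\chi\subset D(w,\delta/2)\subset\Omega$. I would solve the equation $\bar\partial u=\bar\partial\chi$ on $\Omega$ via an $L^2$-estimate (H\"ormander or Ohsawa-Takegoshi) with a subharmonic weight of the form $\phi(z):=2\log|z-w|+\psi(z)$, where $\psi$ is tailored to the capacity condition $(C)_h$. The non-integrable singularity of $e^{-\phi}=|z-w|^{-2}e^{-\psi}$ at $w$ forces $u(w)=0$, so that $f:=\chi-u$ is holomorphic on $\Omega$ with $f(w)=1$, giving $K_\Omega(w)\geq\|f\|_{L^2(\Omega)}^{-2}$. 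Once $\|u\|^2_{L^2(\Omega)}\lesssim\delta^2\log(\delta/h(\delta))$ is established, the conclusion follows from $\|f\|^2\leq 2\|\chi\|^2+2\|u\|^2\lesssim\delta^2\log(\delta/h(\delta))$.

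The principal obstacle is the construction of the weight $\psi$---equivalently, of an appropriate subharmonic exhaustion of $\Omega$ adapted to the weakly uniformly perfect rate $h$. A single-scale logarithmic potential of the equilibrium measure of $\overline{D}(a,2\delta)\setminus\Omega$ has Robin constant $\asymp\log(1/h(\delta))$, but its Laplacian is supported on the complement of $\Omega$, which does not directly feed the $L^2$-estimate. One should instead build $\psi$ as a regularized multi-scale superposition of such potentials at dyadic scales $r_k\asymp 2^{-k}\delta$, with weights calibrated to $h$; the condition $(C)_h$ then provides quantitative control over each contribution, and the logarithmic factor emerges from the dyadic summation. Verifying the required subharmonicity, the pointwise Laplacian lower bound on the annulus $\{\delta/4<|z-w|<\delta/2\}$, and the uniform boundedness of $\psi$ on $\Omega$---in particular, ensuring that the infinite connectivity of $\Omega$ produces no uncontrolled oscillations---forms the technical heart of the proof. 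The analogous constructions on Zalcman-type domains due to Jucha \cite{Jucha} provide a blueprint.
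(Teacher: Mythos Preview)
Your proposal takes a genuinely different route from the paper, and the route you sketch has a real gap.

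The paper does \emph{not} use an $L^2$-$\bar\partial$ scheme. Following Pflug--Zwonek literally, it builds the test function by hand as a Cauchy transform of an equilibrium measure. Given $w$ with nearest boundary point $w'$, the condition $(U)_h$ is used to pick a second boundary point $w''$ with $8\delta_\Omega(w)\le|w''-w'|\le r$, where $c\,h(r)=8\delta_\Omega(w)$. With $E_1=\overline{D}(w',\delta_\Omega(w))\setminus\Omega$ and $E_2=\overline{D}(w'',\delta_\Omega(w))\setminus\Omega$, one sets
\[
f(z)=\int_{E_{11}}\frac{d\mu_{E_{11}}(\zeta)}{z-\zeta}-\int_{E_2}\frac{d\mu_{E_2}(\zeta)}{z-\zeta},
\]
where $E_{11}$ is an angular third of $E_1$ chosen so that $\mathrm{arg}(\zeta-w)$ is confined to a sector of opening $2\pi/3$. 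The sector condition gives $|f(w)|\gtrsim 1/\delta_\Omega(w)$ directly. The $L^2$ norm is split at radius $2r$: on $D(w,2r)$ one uses the Pflug--Zwonek lemma $\int_{D\setminus E}|f_E|^2\lesssim\log(r/\mathrm{Cap}(E))$ together with the capacity bound from Theorem~\ref{th:weakly_uniform_perfect}; outside $D(w,2r)$ the \emph{difference} of the two Cauchy transforms decays like $|z|^{-2}$, so the tail is $O(1)$ regardless of whether $\Omega$ is bounded. No weight, no $\bar\partial$, no multi-scale construction.

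Your scheme, by contrast, hinges entirely on producing a subharmonic $\psi$ on $\Omega$ whose Laplacian is bounded below on the annulus $\{\delta/4<|z-w|<\delta/2\}$. As you yourself note, the logarithmic potential of the equilibrium measure of $\overline D(a,2\delta)\setminus\Omega$ has its Laplacian supported on the complement of $\Omega$, so it contributes nothing to H\"ormander's curvature term inside $\Omega$. Your proposed fix---a ``regularized multi-scale superposition at dyadic scales''---is not a construction; it is a description of the difficulty. Nothing in the capacity condition $(C)_h$ by itself produces strict subharmonicity on $\Omega$, and the Jucha reference concerns explicit Zalcman domains where one writes things down by hand, not general weakly uniformly perfect $\Omega$. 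Moreover, if $\Omega$ is unbounded (which the theorem allows), you would also need $\psi$ to grow at infinity to make $u\in L^2(\Omega)$; you have not addressed this, whereas the paper handles it automatically via the cancellation in $f$. Finally, your opening sentence says you are following Pflug--Zwonek, but their method is precisely the explicit Cauchy-transform construction above, not an $L^2$ weight argument; you should look at what they actually do.
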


Theorem \ref{thm2} implies that these estimates are sharp. Similarly, the condition $0<\alpha<2$ in Theorem \ref{th:Bergman_kernel}/(1) cannot be removed, in view of Carleson's theorem (cf. \cite{Carleson}). Theorem \ref{th:Bergman_kernel}/(1) is also a direct consequence of Theorem 3 in \cite{PflugZwonek} for a bounded domain $\Omega\subset\mathbb{C}$. On the other hand, we shall take a unified approach based on the idea in \cite{PflugZwonek} to prove both two assertions. More details will be given in \S \ref{sec:main}.

We conclude the introduction by the following remark. Chen \cite{Chen} obtained a charaterization of uniformly perfectness by using the boundary behavior of Bergman functions. He proved that a domain $\Omega\subset\mathbb{C}$ has uniformly perfect boundary if and only if $K_\Omega(z)\asymp\delta_\Omega(z)^{-2}$ and $b_\Omega(z)\asymp\delta_\Omega(z)^{-1}$. Here $b_\Omega(z)|dz|$ is the Bergman metric on a planar domain. It is not clear whether we can find characterizarions for weakly uniformly perfect domains in terms of Bergman functions.

\section{Preliminaries}
We present a sketched introduction to the theory of logarithm capacity. Let $\mu$ be a finite Borel measure on $\mathbb{C}$. We define its potential to be the function
\[
p_\mu(z):=\int_{\mathbb{C}}\log|z-w|d\mu(w),\ \ \ z\in\mathbb{C}.
\]
We have $p_\mu(z)\in[-\infty,\infty)$, and it is a subharmonic function. The energy of $\mu$ is defined to be
\[
I(\mu):=\int_{\mathbb{C}}\int_{\mathbb{C}}\log|z-w|d\mu(z)d\mu(w)=\int_{\mathbb{C}}p_\mu(z)d\mu(z).
\]
It is possible that $I(\mu)=-\infty$. Indeed, $E$ is defined to be a polar set if $I(\mu)=-\infty$ for any nonzero funite Borel measure $\mu$ which is supported in $E$. We say certain property holds nearly everywhere (n.e.) on a subset $S\subset\mathbb{C}$, if it holds everywhere on $S\setminus{E}$, where $E$ is a polar set.

Let $E\subset\mathbb{C}$, and $\mathcal{P}(E)$ the collection of all Borel probability measures on $E$. Then the logarithm capacity of $E$ is defined to be
\[
\mathrm{Cap}(E):=\sup_{\mu\in\mathcal{P}(E)}e^{I(\mu)}.
\]
If $E$ is compact and non-polar, then there is a unique equilibrium measure on $E$, i.e., a Borel probability measure $\mu_E$ with $I(\mu_E)=\sup_{\mu\in\mathcal{P}(E)}I(\mu)$. Thus
\[
\mathrm{Cap}(E):=e^{I(\mu_E)}.
\]
Moreover, $\mu_E$ is supported in the exterior boundary of $E$ (see e.g., \cite[Theorem 3.7.6]{Ransford}).

\begin{example}
The equilibrium measure of $\overline{D(a,r)}$ is the normalized arclength measure on $\partial{D(a,r)}$. Moreover, $\mathrm{Cap}(D(a,r))=r$.
\end{example}

Another approach to compute or estimate the logarithm capacity is by using the transfinite diameter. For a compact set $E\subset\mathbb{C}$, we define the $n-$th diameter of $E$ by
\[
\delta_n(E):=\sup\left\{\prod_{1\leq{j<k}\leq{n}}|z_j-z_k|^{\frac{2}{n(n-1)}};\ z_1,\cdots,z_n\in{E}\right\}.
\]
A theorem of Fekete and Szeg\"{o} (cf. \cite{Ransford}, Theorem 5.5.2) asserts that $\delta_n(E)$ is decreasing with respect to $n$, and
\begin{equation}\label{eq:Fekete-Szego}
\lim_{n\rightarrow\infty}\delta_n(E)=\mathrm{Cap}(E).
\end{equation}
The limit is also called the transfinite diameter of $E$.

Let $E$ be compact and $T:E\rightarrow\mathbb{C}$ a map with
\[
|T(z)-T(w)|\leq{A|z-w|^c},
\]
where $A>0$ and $0<c\leq1$. By using \eqref{eq:Fekete-Szego}, one can verify that
\begin{equation}\label{eq:cap_map}
\mathrm{Cap}(T(E))\leq{A}\mathrm{Cap}(E)^c.
\end{equation}
In particular, if $T$ is the dilatation map $z\mapsto{tz}$ with $t>0$, then
\begin{equation}\label{eq:cap_dilatation}
\mathrm{Cap}(T(E))=\mathrm{Cap}(tE)=t\mathrm{Cap}(E).
\end{equation}
Moreover, for any Borel set in $\mathbb{C}$,
\begin{equation}\label{eq:measure_dilatation}
\mu_{tE}(B)=\mu_E(t^{-1}B).
\end{equation}

More properties of logarithm capacity can be found in \cite{Ransford}, Chapter 5. In particular, we shall make use of the following inequality
\begin{equation}\label{eq:cap_sum}
\frac{1}{\log\left(d/\mathrm{Cap}(E)\right)}\leq\sum_n\frac{1}{\log\left(d/\mathrm{Cap}(E_n)\right)},
\end{equation}
where $\{E_n\}$ is a sequence of Borel subsets in $\mathbb{C}$, $E=\bigcup_n{E_n}$, and $d>0$ with $\mathrm{diam}\,(E)\leq{d}$ and $\mathrm{Cap}\,(E)\leq{d}$.

\section{Bergman Functions on Zalcman Type Domains}
Let us first prove the following technical lemma.

\begin{lemma}\label{lem1}
Let $0<r<R<\infty$. Then there exists a smooth function $\varphi$, with $\varphi\equiv1$ when $|z|\leq{r}$, $\varphi\equiv0$ when $|z|\geq{R}$, and
\[
\int_{\mathbb{C}}\left|\frac{\partial\varphi}{\partial\bar{z}}\right|^2\lesssim\left(\log\frac{R}{r}\right)^{-1}.
\]
\end{lemma}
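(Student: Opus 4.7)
The natural candidate is a radial cutoff built from the logarithm, which is the capacitary extremal between two concentric circles. I would set $\varphi(z) = f(|z|)$ where $f$ equals $1$ on $[0,r]$, equals $0$ on $[R,\infty)$, and on the intermediate annulus is the logarithmic profile
\[
f(t) = \frac{\log(R/t)}{\log(R/r)}, \qquad r \le t \le R.
\]
This is the standard choice because, in the radial case, the Dirichlet-type integral reduces to a one-dimensional weighted integral for which the logarithm is extremal.

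The key computation is the identification of $\partial\varphi/\partial\bar z$ for a radial function. Writing $\partial/\partial\bar z = \tfrac12(\partial_x + i\partial_y)$, one gets $\partial\varphi/\partial\bar z = \tfrac12 f'(|z|)\, z/|z|$, so that $|\partial\varphi/\partial\bar z|^2 = \tfrac14|f'(|z|)|^2$. Passing to polar coordinates then yields
\[
\int_{\mathbb{C}}\left|\frac{\partial\varphi}{\partial\bar z}\right|^2 dA
= \frac{\pi}{2}\int_r^R |f'(t)|^2\, t\, dt
= \frac{\pi}{2}\cdot \frac{1}{(\log(R/r))^2}\int_r^R \frac{dt}{t}
= \frac{\pi}{2\log(R/r)},
\]
which is precisely the asserted bound.

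The only remaining issue is that the $f$ above is merely Lipschitz, with corners at $t=r$ and $t=R$, while the statement requires $\varphi$ smooth. To fix this, I would either mollify $\varphi$ by convolving with a radial bump of small support $\varepsilon \ll (R-r)$, or, equivalently, replace $r$ and $R$ by $r' = (1+\eta)r$ and $R' = (1-\eta)R$ with $\eta$ small, then smooth across the corresponding transitions by a partition of unity. Both operations produce a genuinely smooth $\varphi$ with $\varphi \equiv 1$ on $\{|z|\le r\}$ and $\varphi\equiv 0$ on $\{|z|\ge R\}$, and change the Dirichlet integral by at most a multiplicative constant, which is harmless for the $\lesssim$ bound. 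I do not expect any step to be a serious obstacle; the only minor care needed is to make sure the mollification does not enlarge the supports $\{\varphi = 1\}$ or $\{\varphi = 0\}$ past the required thresholds, which is handled by the inward shift above.
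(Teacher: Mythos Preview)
Your proposal is correct and follows essentially the same approach as the paper: both use the radial logarithmic profile on the annulus $r\le|z|\le R$ to produce the Dirichlet bound $\asymp(\log(R/r))^{-1}$. The only difference is that the paper achieves smoothness in one stroke by setting $\varphi(z)=\chi\big((\log|z|-\log r)/(\log R-\log r)\big)$ for a fixed smooth $\chi:\mathbb{R}\to[0,1]$ with $\chi|_{(-\infty,0]}\equiv1$ and $\chi|_{[1,\infty)}\equiv0$, which sidesteps your mollification/inward-shift step entirely.
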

\begin{proof}
Let $\chi:\mathbb{R}\rightarrow[0,1]$ be a smooth function with $\chi|_{(-\infty,0]}\equiv1$, and $\chi|_{[1,+\infty)}\equiv0$. Consider
\[
\varphi(z):=\chi\left(\frac{\log|z|-\log{r}}{\log{R}-\log{r}}\right).
\]
Then $\varphi|_{\{|z|\leq{r}\}}=1$, $\varphi|_{\{|z|\geq R\}}=0$, and
\[
\int_{\mathbb{C}}\left|\frac{\partial\varphi}{\partial\bar{z}}\right|^2\lesssim\left(\log\frac{R}{r}\right)^{-2}\int_{r<|z|<R}\frac{1}{|z|^2}\asymp\left(\log\frac{R}{r}\right)^{-1}.
\]
\end{proof}

Let $K_\Omega(z)$ be the Bergman kernel function and $b_\Omega(z)|dz|$ be the Bergman metric for a planar domain $\Omega$. Recall that
\[
K_\Omega(z)=\sup\left\{|f(z)|^2;\ f\in{A^2(\Omega)},\ \|f\|_{L^2(\Omega)}=1\right\}
\]
and
\[
b_\Omega(z)=K_\Omega(z)^{-1/2}\sup\left\{|f'(z)|;\ f\in{A^2(\Omega)},\ f(z)=0,\ \|f\|_{L^2(\Omega)}=1\right\}.
\]

\subsection{Proof of Theorem \ref{thm2}/(1)}\label{subsec:alpha}
Let $\Omega$ be the Zalcman-type domain \eqref{eq:Zalcman} defined by $h=h_{1,\alpha}$. Then we have
\begin{equation}\label{eq4.2}
x_{k+1}\ll{x_k},\ \ \ r_k\ll{x_k},
\end{equation}
and
\begin{equation}\label{eq4.3}
\log\frac{1}{x_k}\asymp\log\frac{1}{x_{k+1}}\asymp\log\frac{1}{r_k}\asymp\frac{1}{\alpha^k}.
\end{equation}
Here and in what follows, the implicit constants can depend only on $\alpha$ and $x_1$.

We divide the proof into the four parts.

(i) By Proposition \ref{prop:Zalcman_weak_uniformly_perfect}/(1), Theorem \ref{th:Bergman_kernel} can be applied to $\Omega$ when $1<\alpha<2$ to obtain the lower estimate for Bergman kernel. In general, given $0<x\ll1$, take an integer $k$ with $x\in(x_{k+1},x_k)$ and consider the function
\[
f(z):=\frac{1}{z-{x_{k+1}}}.
\]
It follows that
\begin{equation}\label{eq4.4}
\|f\|_{L^2(\Omega)}^2\leq\int_{r_{k+1}<|z-x_{k+1}|<2}\frac{1}{|z-x_{k+1}|^2}\lesssim\log\frac{1}{r_k},
\end{equation}
and hence
\begin{equation}\label{eq4.5}
K_\Omega(-x)\geq\frac{|f(-x)|^2}{\|f\|_{L^2(\Omega)}^2}\gtrsim\frac{1}{x^2\log\frac{1}{x}}.
\end{equation}
in view of \eqref{eq4.3}.

(ii) Next, we consider the upper estimate for $K_\Omega$. For later usage, we consider $K_\Omega(w)$, where $w\in\Omega$ and $x_{k+1}<|w|<x_k$, instead of the special case $w=-x\in(-x_k,-x_{k+1})$. Set
\[
\Omega_k:=D\left(0,\frac{4}{5}\right)\setminus\left(\bigcup^{k+1}_{l=1}\overline{D(x_l,2r_l)}\cup\overline{D(0,2x_{k+2})}\right).
\]
By \eqref{eq4.2}, the closed discs $\overline{D(x_l,2r_l)},\ 1 \le l \le k+1$ and $\overline{D(0,2x_{k+2})}$ are pairwise disjoint. Moreover, $\overline{\Omega}_k\subset\Omega$. Then we can apply Cauchy's integral formula to obtain
\begin{equation}\label{eq4.6}
f(w)=\frac{1}{2\pi{i}}\int_{\partial\Omega_k}\frac{f(z)}{z-w}dz,\ \ \ \forall\,f\in{A^2(\Omega)}.
\end{equation}

Following \cite{Jucha}, we take $\varphi_0\in{C^\infty_0(\mathbb{C})}$, such that $\varphi_0\equiv1$ when $|z|=4/5$ and
\[
\mathrm{supp}\,\varphi_0\subset\left\{\frac{3}{4}<|z|<\frac{5}{4}\right\}.
\]
For any $1\leq{l}\leq{k+1}$, set
\[
A_l:=\left\{2r_l<|z-x_l|<\frac{x_l}{4}\right\}\subset\Omega_k,
\]
so that they have pairwise disjoint closures. By Lemma \ref{lem1}, there exists a smooth function $\varphi_l$ with $\varphi_l|_{\{|z-x_l|\leq{2r_l}\}}\equiv1$, $\varphi_l|_{\{|z-x_l|\geq x_l/4\}}\equiv0$, and
\begin{equation}\label{eq4.7}
\left\|\frac{\partial\varphi_l}{\partial\bar{z}}\right\|_{L^2(\mathbb{C})}\lesssim\left(\log\frac{x_l}{r_l}\right)^{-\frac{1}{2}}.
\end{equation}
Moreover, the set
\[
\widetilde{A}_k:=\left\{2x_{k+2}<|z|<\frac{x_{k+1}}{4}\right\}\subset\Omega_k
\]
satisfies $\overline{\widetilde{A}}_k\cap{\overline{A}_l}=\emptyset$. By Lemma \ref{lem1} again, we have a smooth function $\widetilde{\varphi}_k$ with $\widetilde{\varphi}_k|_{\{|z|\leq{2x_{k+2}}\}}\equiv1$, $\widetilde{\varphi}_k|_{\{|z|\geq x_{k+1}/4\}}\equiv0$, and
\begin{equation}\label{eq4.8}
\left\|\frac{\partial\widetilde{\varphi}_k}{\partial\bar{z}}\right\|_{L^2(\mathbb{C})}\lesssim\left(\log\frac{x_{k+1}}{x_{k+2}}\right)^{-\frac{1}{2}}.
\end{equation}
We set
\[
\phi_k:=\varphi_0+\sum^{k+1}_{l=1}\varphi_l+\widetilde{\varphi}_k.
\]
It follows that $\phi_k\equiv1$ on $\partial\Omega_k$ and $\phi_k(w)=0$ when $x_k/3<|w|<2x_k/3$ and $k$ is sufficiently large. Thus we can apply Green's formula to \eqref{eq4.6}, i.e.,
\begin{equation}\label{eq4.9}
f(w)=\frac{1}{2\pi{i}}\int_{\partial\Omega_k}\frac{f(z)\phi_k(z)}{z-w}dz=-\frac{1}{2\pi{i}}\int_{\Omega_k}\frac{f(z)}{z-w}\frac{\partial\phi_k(z)}{\partial\bar{z}}dz\wedge{d\bar{z}}
\end{equation}
when $x_k/3<|w|<2x_k/3$. It follows that
\begin{eqnarray}
\notag |f(w)| &\lesssim& \int_{z\in\Omega_k}\frac{|f|}{|z-w|}\left|\frac{\partial\phi_k}{\partial\bar{z}}\right|\\
\notag &\leq&\int_{\frac{3}{4}<|z|<\frac{4}{5}}\frac{|f|}{|z-w|}\left|\frac{\partial\varphi_0}{\partial\bar{z}}\right|+\sum^{k+1}_{l=1}\int_{z\in{A_l}}\frac{|f|}{|z-w|}\left|\frac{\partial\varphi_l}{\partial\bar{z}}\right|+\int_{z\in\widetilde{A}_k}\frac{|f|}{|z-w|}\left|\frac{\partial\widetilde{\varphi}_k}{\partial\bar{z}}\right|\\
&=:&I_1+I_2+I_3, \ \ \ x_k/3<|w|<2x_k/3.\label{eq4.10}
\end{eqnarray}
Clearly, Cauchy-Schwarz inequality implies that
\begin{equation}\label{eq4.11}
I_1\lesssim \|f\|_{L^2(\Omega)}.
\end{equation}
As for $I_2$, since $x_{l+1}\ll{x_l}$ and $x_k/3<|w|<2x_k/3$, we infer from \eqref{eq4.7} and \eqref{eq4.3} that
\begin{eqnarray*}
I_2 &\lesssim& \frac{1}{|w|}\int_{z\in{A_{k+1}}}|f|\left|\frac{\partial\varphi_{k+1}}{\partial\bar{z}}\right|+\sum^k_{l=1}\frac{1}{x_l}\int_{z\in{A_l}}|f|\left|\frac{\partial\varphi_l}{\partial\bar{z}}\right|\\
&\leq& \frac{1}{|w|}\|f\|_{L^2(\Omega)}\left\|\frac{\partial\varphi_{k+1}}{\partial\bar{z}}\right\|_{L^2(\mathbb{C})}+\sum^k_{l=1}\frac{1}{x_l}\|f\|_{L^2(\Omega)}\left\|\frac{\partial\varphi_l}{\partial\bar{z}}\right\|_{L^2(\mathbb{C})}\\
&\lesssim& \frac{1}{|w|}\|f\|_{L^2(\Omega)} \frac{1}{(\log \frac{x_{k+1}}{r_{k+1}})^{\frac{1}{2}}}       +\sum^k_{l=1}\frac{1}{x_l}\|f\|_{L^2(\Omega)}     \frac{1}{(\log \frac{x_l}{r_l})^{\frac{1}{2}}}  \\
&\lesssim&\left(\frac{1}{|w|(\log\frac{1}{|w|})^{\frac{1}{2}}}+\sum^k_{l=1}\frac{1}{x_l(\log\frac{1}{x_l})^{\frac{1}{2}}}\right)\|f\|_{L^2(\Omega)}.
\end{eqnarray*}
By \eqref{eq4.2} and \eqref{eq4.3}, we have $x_{l+1}\ll{x_l}$ and $\log(1/x_{l+1})\asymp\log(1/x_l)$. Thus we may assume that
\[
\frac{1}{x_{l+1}(\log\frac{1}{x_{l+1}})^{\frac{1}{2}}}\geq\frac{2}{x_l(\log\frac{1}{x_l})^{\frac{1}{2}}}.
\]
Then
\begin{eqnarray}
\sum^k_{l=1}\frac{1}{x_l(\log\frac{1}{x_l})^{\frac{1}{2}}} &\leq& \frac{1}{x_k(\log\frac{1}{x_k})^{\frac{1}{2}}}\sum^k_{l=1}\frac{1}{2^{k-l}}\nonumber\\
&\asymp& \frac{1}{x_k(\log\frac{1}{x_k})^{\frac{1}{2}}}\nonumber\\
&\asymp& \frac{1}{|w|(\log\frac{1}{|w|})^{\frac{1}{2}}},\label{eq:sum_upper_alpha}
\end{eqnarray}
and hence
\begin{equation}\label{eq4.12}
I_2\lesssim\frac{\|f\|_{L^2(\Omega)}}{|w|(\log\frac{1}{|w|})^{\frac{1}{2}}}.
\end{equation}
For $I_3$, since $x_{k+1}\ll|w|$, we can proceed similarly to obtain
\begin{equation}\label{eq4.13}
I_3\lesssim\frac{1}{|w|}\int_{z\in\widetilde{A}_k}|f|\left|\frac{\partial\widetilde{\varphi}_k}{\partial\bar{z}}\right|\leq\frac{1}{|w|}\|f\|_{L^2(\Omega)}\left\|\frac{\partial\widetilde{\varphi}_k}{\partial\bar{z}}\right\|_{L^2(\mathbb{C})}\lesssim\frac{\|f\|_{L^2(\Omega)}}{|w|(\log\frac{1}{|w|})^{\frac{1}{2}}}.
\end{equation}
Notice that we used \eqref{eq4.8} and \eqref{eq4.3} in the last inequality. By \eqref{eq4.11}, \eqref{eq4.12} and \eqref{eq4.13}, we have
\[
|f(w)|\lesssim \frac{\|f\|_{L^2(\Omega)}}{|w|(\log\frac{1}{|w|})^{\frac{1}{2}}},\ \ \ x_k/3<|w|<2x_k/3,
\]
and hence
\begin{equation}\label{eq4.14}
K_\Omega(w)\lesssim\frac{1}{|w|^2\log\frac{1}{|w|}},\ \ \ x_k/3<|w|<2x_k/3.
\end{equation}
Moreover, the above argument also works for $-x\in(-x_k,-x_{k+1})$. Thus
\begin{equation}\label{eq4.14-1}
K_\Omega(-x)\lesssim\frac{1}{x^2\log\frac{1}{x}},\ \ \ x\in(0,x_1).
\end{equation}

(iii) It remains to find the boundary behavior of $d_\Omega$. Let us first consider the lower bound. Let $w\in\Omega$ with $x_{k+1}<|w|<x_k$. Following \cite{Jucha}, we consider the holomorphic function
\[
f(z)=\frac{1}{z-x_k}-\frac{w-x_{k+1}}{w-x_k}\cdot\frac{1}{z-x_{k+1}}.
\]
Clearly, $f(w)=0$. When $x_k/3<|w|<2x_k/3$ or $w=-x\in(-x_k,-x_{k+1})$, we have
\[
|f'(w)|=\left|\frac{x_k-x_{k+1}}{(w-x_{k+1})(w-x_k)^2}\right|\gtrsim\frac{1}{|w|x_k},
\]
and
\begin{eqnarray*}
\|f\|_{L^2(\Omega)}&\leq& \left\|\frac{1}{\cdot-x_k}\right\|_{L^2(\Omega)}+\left|\frac{w-x_{k+1}}{w-x_k}\right|\cdot\left\|\frac{1}{\cdot-x_{k+1}}\right\|_{L^2(\Omega)}\\
&\lesssim& \left(\log\frac{1}{r_{k+1}}\right)^{\frac{1}{2}}+\left(\log\frac{1}{r_k}\right)^{\frac{1}{2}}\\
&\asymp&\left(\log\frac{1}{|w|}\right)^{\frac{1}{2}},
\end{eqnarray*}
in view of \eqref{eq4.3}. This combined with \eqref{eq4.14} yields that
\[
b_\Omega(w)\geq\frac{|f'(w)|/\|f\|_{L^2(\Omega)}}{K_\Omega(w)^{\frac{1}{2}}}\gtrsim\frac{1}{x_k}
\]
when $x_k/3<|w|<2x_k/3$ or $w=-x\in(-x_k,-x_{k+1})$.

Let $x\in(x_{k+1},x_k)$. For any smooth curve $\gamma:[0,1]\rightarrow\Omega$ with $\gamma(0)=-x_1$ and $\gamma(1)=-x$, we can take some disjoint pieces $\gamma|_{[a_l,b_l]}$, ($ 1\leq{l}\leq{k-1}$), where $a_l<b_l<a_{l+1}$, $|\gamma(a_l)|=2x_l/3$, $|\gamma(b_l)|=x_l/3$, and
\[
\gamma([a_l,b_l])\subset\left\{\frac{x_l}{3}<|w|<\frac{2x_l}{3}\right\}.
\]
It follows that
\begin{eqnarray*}
\int_{\gamma|_{[a_l,b_l]}}b_\Omega(z)|dz| &=& \int^{b_l}_{a_l}b_\Omega(\gamma(t))|d\gamma(t)|\\
&\geq& \left|\int^{b_l}_{a_l}b_\Omega(\gamma(t))d|\gamma(t)|\right|\\
&\gtrsim& \int^{2x_k/3}_{x_k/3}\frac{1}{x_k}dr\\
&\geq& \frac{1}{3},
\end{eqnarray*}
and hence
\[
\int_\gamma b_\Omega(z)|dz|\gtrsim{k}.
\]
By definition, we have $x_k=x_1^{-\alpha^k}$, so that $k=\log\log\frac{1}{x}$. Since $\gamma$ is arbitrary,  we obtain the desired lower estimate for Bergman distance:
\begin{equation}\label{eq4.15}
d_\Omega(-x_1,-x)\gtrsim\log\log\frac{1}{x}.
\end{equation}

(iv) The upper estimate for $b_\Omega$ will also be obtained by using Cauchy's integral formula. Let $\Omega_k$, $\varphi_0$, $\varphi_l$, $\widetilde{\varphi}_k$ and $\varphi_k$ be as above, and $x\in(x_{k+1},x_k)$. It follows from Cauchy's integral formula and Green's formula that
\begin{eqnarray*}
f'(-x) &=& \frac{1}{2\pi{i}}\int_{\partial\Omega_k}\frac{f(z)}{(z+x)^2}dz=\frac{1}{2\pi{i}}\int_{\partial\Omega_k}\frac{f(z)\phi_k(z)}{(z+x)^2}dz\\
&=&-\frac{1}{2\pi{i}}\int_{\Omega_k}\frac{f(z)}{(z+x)^2}\frac{\partial\phi_k(z)}{\partial\bar{z}}dz\wedge{d\bar{z}}.
\end{eqnarray*}
If $f(-x)=0$, then we infer from \eqref{eq4.9} that
\[
\int_{\Omega_k}\frac{f(z)}{z+x}\frac{\partial\phi_k(z)}{\partial\bar{z}}dz\wedge{d\bar{z}}=0.
\]
Thus for any $f\in{A^2(\Omega)}$ with $f(-x)=0$, we have
\begin{eqnarray}
f'(-x) &=& -\frac{1}{2\pi{i}}\int_{\Omega_k}f(z)\left(\frac{1}{(z+x)^2}-\frac{1}{x(z+x)}\right)\frac{\partial\phi_k(z)}{\partial\bar{z}}dz\wedge{d\bar{z}}\nonumber\\
&=&\frac{1}{2\pi{i}}\int_{\Omega_k}\frac{f(z)}{(z+x)^2}\frac{z}{x}\frac{\partial\phi_k(z)}{\partial\bar{z}}dz\wedge{d\bar{z}}.\label{eq4.16}
\end{eqnarray}
As in the upper estimate for Bergman kernel, we have
\begin{eqnarray}
\notag |f(w)| &\lesssim& \int_{z\in\Omega_k}\frac{|f|}{|z+x|^2}\frac{|z|}{x}\left|\frac{\partial\phi_k}{\partial\bar{z}}\right|\\
\notag &\leq&\int_{\frac{3}{4}<|z|<\frac{4}{5}}\frac{|f|}{|z+x|^2}\frac{|z|}{x}\left|\frac{\partial\varphi_0}{\partial\bar{z}}\right|+\sum^{k+1}_{l=1}\int_{z\in{A_l}}\frac{|f|}{|z+x|^2}\frac{|z|}{x}\left|\frac{\partial\varphi_l}{\partial\bar{z}}\right|\\
\notag& &+\int_{z\in\widetilde{A}_k}\frac{|f|}{|z+x|^2}\frac{|z|}{x}\left|\frac{\partial\widetilde{\varphi}_k}{\partial\bar{z}}\right|\\
&=:&I_4+I_5+I_6. \label{eq4.17}
\end{eqnarray}
Clearly,
\begin{equation}\label{eq4.18}
I_4\lesssim\frac{\|f\|_{L^2(\Omega)}}{x}.
\end{equation}
For $I_5$, analogously to the estimate for $I_2$, we have
\begin{eqnarray}
I_5 &\lesssim& \frac{x_{k+1}}{x^3}\int_{z\in{A_{k+1}}}|f|\left|\frac{\partial\varphi_{k+1}}{\partial\bar{z}}\right|+\sum^k_{l=1}\frac{1}{x_lx}\int_{z\in{A_l}}|f|\left|\frac{\partial\varphi_l}{\partial\bar{z}}\right|\nonumber\\
&\leq& \frac{x_{k+1}}{x^3}\|f\|_{L^2(\Omega)}\left\|\frac{\partial\varphi_{k+1}}{\partial\bar{z}}\right\|_{L^2(\mathbb{C})}+\sum^k_{l=1}\frac{1}{x_lx}\|f\|_{L^2(\Omega)}\left\|\frac{\partial\varphi_l}{\partial\bar{z}}\right\|_{L^2(\mathbb{C})}\nonumber\\
&\lesssim& \left(\frac{x_{k+1}}{x^3(\log\frac{1}{x})^{\frac{1}{2}}}+\frac{1}{x}\sum^k_{l=1}\frac{1}{x_l(\log\frac{1}{x_l})^{\frac{1}{2}}}\right)\|f\|_{L^2(\Omega)}\nonumber\\
&\lesssim& \left(\frac{x_{k+1}}{x^2}+\frac{1}{x_k}\right)\frac{\|f\|_{L^2(\Omega)}}{x(\log\frac{1}{x})^{\frac{1}{2}}}\label{eq4.19}
\end{eqnarray}
in view of \eqref{eq4.3}, \eqref{eq4.7} and \eqref{eq:sum_upper_alpha}. Similarly, $I_6$ satisfies
\begin{equation}\label{eq4.20}
I_6\lesssim\frac{x_{k+1}}{x^3}\|f\|_{L^2(\Omega)}\left\|\frac{\partial\widetilde{\varphi}_k}{\partial\bar{z}}\right\|_{L^2(\mathbb{C})}\lesssim\frac{x_{k+1}\|f\|_{L^2(\Omega)}}{x^3(\log\frac{1}{x})^{\frac{1}{2}}}.
\end{equation}
Combine \eqref{eq4.18}, \eqref{eq4.19} with \eqref{eq4.20}, we obtain
\[
\frac{|f'(-x)|}{\|f\|_{L^2(\Omega)}}\lesssim\left(\frac{x_{k+1}}{x^2}+\frac{1}{x_k}\right)\frac{1}{x(\log\frac{1}{x})^{\frac{1}{2}}},\ \ \ x\in(x_{k+1},x_k),
\]
for all $f\in{A^2(\Omega)}$ with $f(-x)=0$. By using \eqref{eq4.5}, we have
\[
b_\Omega(-x)\lesssim\frac{1}{K_\Omega(-x)^{\frac{1}{2}}}\left(\frac{x_{k+1}}{x^2}+\frac{1}{x_k}\right)\frac{1}{x(\log\frac{1}{x})^{\frac{1}{2}}}\lesssim\frac{x_{k+1}}{x^2}+\frac{1}{x_k},
\]
which implies the desired upper estimate
\begin{eqnarray} \label{eq4.21}
d_\Omega(-x_1,-x) &\leq& \sum^k_{l=1}\int^{x_l}_{x_{l+1}}b_\Omega(t)dt\lesssim\sum^k_{l=1}\left(2-\frac{2x_{l+1}}{x_l}\right)\lesssim{k}\nonumber\\
&\asymp& \log\log\frac{1}{x}.
\end{eqnarray}
Now we complete the proof of Theorem \ref{thm2}, with the four estimate obtained in \eqref{eq4.5}, \eqref{eq4.14-1}, \eqref{eq4.15} and \eqref{eq4.21}.

\subsection{Proof of Theorem \ref{thm2}/(2)}
Let $\Omega$ be the Zalcman-type domain \eqref{eq:Zalcman} defined by $h=h_{2,\beta}$. Thus the sequence $\{x_k\}$ satisfies
\begin{equation}\label{eq5.1}
\log \frac{1}{x_{k+1}} = \log \frac{1}{x_k}+\beta \log \log \frac{1}{x_k} \asymp \log \frac{1}{x_{k}},
\end{equation}
so that
\begin{eqnarray}\label{eq:loglog_compare}
\log \log \frac{1}{x_{k+1}}
&=& \log \log \frac{1}{x_k}+\log\left(1+\frac{\log\log\frac{1}{x_k}}{\log\frac{1}{x_k}}\right)\nonumber\\
&=& \log \log \frac{1}{x_k}+O(1)\cdot\frac{\log\log\frac{1}{x_k}}{\log\frac{1}{x_k}}\nonumber\\
&=& \log \log \frac{1}{x_k}+o(1).
\end{eqnarray}
Moreover,
\begin{eqnarray*}
\frac{\log\frac{1}{x_{k+1}}}{\log\log\frac{1}{x_{k+1}}}
&=& \frac{\log \frac{1}{x_k}+\beta \log \log \frac{1}{x_k}}{\log\log\frac{1}{x_k}}\cdot\frac{\log\log\frac{1}{x_k}}{\log \log \frac{1}{x_k}+O(1)\cdot\frac{\log\log\frac{1}{x_k}}{\log\frac{1}{x_k}}}\\
&=& \left(\frac{\log\frac{1}{x_k}}{\log\log\frac{1}{x_k}}+\beta\right)\left(1-O(1)\cdot\frac{1}{\log\frac{1}{x_k}}\right)\\
&=& \frac{\log\frac{1}{x_k}}{\log\log\frac{1}{x_k}}+\beta+o(1),
\end{eqnarray*}
and hence
\begin{equation} \label{eq5.2}
\frac{\log \frac{1}{x_k}}{\log \log \frac{1}{x_k}}\asymp k.
\end{equation}

Here and in what follows, the implicit constants can depend only on $\beta$ and $x_1$. Since $r_k=x_{k+1}$, similar relations hold for $\{r_k\}$.

The proof will be also written in four parts.

(i) The lower estimate
\begin{equation}\label{eq5.9}
K_{\Omega}(-x) \gtrsim \frac{1}{x^2\log \log \frac{1}{x}}
\end{equation}
has been obtained in Theorem \ref{th:Bergman_kernel}/(2). We postpone the proof to \S\,\ref{sec:main}.

(ii) Next, we consider the upper estimate for $K_\Omega(w)$, where $x_k/3 <|w|<2x_k/3$ and $w=-x \in (-x_k,-x_{k+1})$. Let the domains $\Omega_k$, $A_l$, $\widetilde{A}_k$ and cut-off functions $\varphi_0$, $\varphi_l, \widetilde{\varphi}_k$ and $\phi_k$ be as in \ref{subsec:alpha}/(ii), but with $x_k,r_k$ defined by the function $h_{2,\beta}$. For $f\in{A^2(\Omega)}$, a similar application of Cauchy's integral formula and Green's formula gives
\begin{eqnarray*}
|f(w)| &\lesssim& \int_{\Omega_k} \frac{|f(z)|}{|z-w|}\left| \frac{\partial \varphi_0}{\partial \bar{z}} \right|  +\sum_{l=1}^{k+1}\int_{\Omega_k} \frac{|f(z)|}{|z-w|}\left| \frac{\partial \varphi_l}{\partial \bar{z}} \right| +\int_{\Omega_k} \frac{|f(z)|}{|z-w|}\left| \frac{\partial \widetilde{\varphi}_k}{\partial \bar{z}} \right|\\
&=:& I_1+I_2+I_3.
\end{eqnarray*}
$I_1$ still satisfies
\[
I_1 \lesssim \| f  \|_{L^2(\Omega)}.
\]
We also have $x_{k+1}\ll{x_k}$ in this case, so that \eqref{eq:loglog_compare} implies that
\begin{equation}\label{eq:double_beta}
\frac{1}{x_{l+1}(\log \log \frac{1}{x_{l+1}})^{\frac{1}{2}}} \geq \frac{2}{x_l(\log \log \frac{1}{x_l})^{\frac{1}{2}}},
\end{equation}
and hence
\begin{eqnarray*}
I_2 &\lesssim& \frac{1}{|w|}\|f\|_{L^2(\Omega)} \left\| \frac{\partial \varphi_{k+1}}{\partial \bar{z}} \right\|_{L^2(\mathbb{C})}+\sum_{l=1}^k \frac{1}{x_l}\|f\|_{L^2(\Omega)} \left\| \frac{\partial \varphi_{l}}{\partial \bar{z}} \right\|_{L^2(\mathbb{C})}\\
&\lesssim&\|f\|_{L^2(\Omega)} \left(  \frac{1}{|w|(\log \frac{x_{k+1}}{r_{k+1}})^{\frac{1}{2}}} +\sum_{l=1}^k\frac{1}{x_l(\log \frac{x_l}{r_l})^{\frac{1}{2}}} \right)  \\ 
&\lesssim&\|f\|_{L^2(\Omega)} \left(\frac{1}{|w|(\log \log \frac{1}{x_{k+1}})^{\frac{1}{2}}}+\frac{1}{x_k(\log \log\frac{1}{x_k})^{\frac{1}{2}}}\right)\\
&\lesssim& \frac{1}{|w|(\log \log \frac{1}{|w|})^{\frac{1}{2}}}\|f\|_{L^2(\Omega)}.
\end{eqnarray*}
Notice that we used \eqref{eq4.7} in the second inequality. Moreover,
\[
I_3 \lesssim \frac{1}{|w|(\log \frac{x_{k+1}}{x_{k+2}})^{\frac{1}{2}}}\|f\|_{L^2(\Omega)} \lesssim \frac{1}{|w|(\log \log \frac{1}{ |w|})^{\frac{1}{2}}}\|f\|_{L^2(\Omega)}.
\]
Thus we conclude that
\begin{equation}\label{eq5.3}
K_{\Omega}(w) \lesssim \frac{1}{|w|^2\log \log \frac{1}{|w|}},
\end{equation}
where $x_k/3<|w|<2x_k/3$ or $w=-x, x\in (x_{k+1},x_k)$.

(iii) We consider the lower estimate for $b_\Omega$. For $w\in \Omega$ with $x_k/3<|w|<2x_k/3$, set
\begin{equation}\label{eq5.15}
f(z)=\frac{1}{z-x_k}-\frac{a_k}{z-x_{k+1}}-\frac{(1-a_k)}{z-x_{k-1}} ,
\end{equation}
where $a_k \in \mathbb{C}$ such that $f(w)=0$. We have
\[
a_k=\frac{(x_{k-1}-x_k)(w-x_{k+1})}{(x_{k-1}-x_{k+1})(w-x_k)} \asymp \frac{x_{k-1}x_k}{x_{k-1}x_k} =O(1).
\]
Lengthy but straightforward computation yields
\begin{equation}\label{eq5.16}
f(z)=-\frac{(x_{k-1}-x_k)(x_k-x_{k+1})(w-z)}{(w-x_k)(z-x_k)(z-x_{k+1})(z-x_{k-1})}.
\end{equation}
and
\begin{eqnarray}
|f'(w)|&=&\left| -\frac{1}{(w-x_k)^2}+\frac{a_k}{(w-x_{k+1})^2}+\frac{1-a_k}{(w-x_{k-1})^2}\right|\nonumber\\
&=& \left| -\frac{1}{(w-x_k)^2}+\frac{x_{k-1}-x_k}{(x_{k-1}-x_{k+1})(w-x_k)(w-x_{k+1})}+\frac{1-a_k}{(w-x_{k-1})^2}\right| \nonumber\\
&=&\left| \frac{(x_k-x_{k+1})(x_k+x_{k+1}-w-x_{k-1})}{(x_{k-1}-x_{k+1})(w-x_k)^2(w-x_{k+1})}+\frac{1-a_k}{(w-x_{k-1})^2}\right|\nonumber \\
&\gtrsim&\left| \frac{(x_k-x_{k+1})(x_k+x_{k+1}-w-x_{k-1})}{(x_{k-1}-x_{k+1})(w-x_k)^2(w-x_{k+1})}\right|-\left|\frac{1}{(w-x_{k-1})^2}\right|\nonumber\\
&\gtrsim& \frac{1}{x_k^2}-\frac{1}{x_{k-1}^2}\nonumber\\
&\gtrsim& \frac{1}{x_k^2}, \label{eq5.17}
\end{eqnarray}
since $x_k\ll{x_{k-1}}$. We also need to find an upper bound for $\|f\|_{L^2(\Omega)}$. Divide the domain $\Omega$ into the following four parts:
\begin{eqnarray*}
\Omega_1 &=& \left\{ z ;\ r_{k+1}<|z-x_{k+1}|<\frac{x_{k+1}}{2}\right\},\\
\Omega_2 &=& \left\{ z ;\ r_{k}<|z-x_{k}|<\frac{x_{k}}{2}\right\},\\
\Omega_3 &=& \left\{ z ;\ r_{k-1}<|z-x_{k-1}|<\frac{x_{k-1}}{2}\right\},\\
\Omega_4 &=& \Omega \setminus (\Omega_1\cup \Omega_2 \cup \Omega_3).
\end{eqnarray*}
We also set
\begin{eqnarray*}
\Omega_4' &=& \left\{ z;\ \frac{x_{k-1}}{2} \le |z-x_{k-1}|\le 2x_{k-1}  \right\} \cap \left\{z;\ |z-x_{k+1}|\ge \frac{x_{k+1}}{2}\right\}\\
\Omega_4'' &=& \left\{ z;\ \frac{x_{k-1}}{2} \le |z-x_{k-1}|\le 2x_{k-1}  \right\} \cap \left\{z;\ |z-x_{k}|\ge \frac{x_{k}}{2}\right\}\\
\Omega_{4,m} &=& \{ z;\ mx_{k-1}\le |z-x_{k-1}| \le (m+1)x_{k-1}  \},\ \ \ m=2,3,4,\cdots,
\end{eqnarray*}
so that
\[
\Omega_4 \subset  \bigcup_{m=2}^{\infty}\Omega_{4,m}\cup(\Omega_4' \cap \Omega_4'').
\]
If $z\in \Omega_j$ ($j=1,2,3$), we infer from \eqref{eq5.16} that
\[
|f(z)| \asymp \frac{1}{|z-x_{k+2-j}|}.
\]
Thus
\begin{equation} \label{eq5.18}
\int_{\Omega_j}|f|^2 \asymp \log \frac{x_{k+2-j}}{r_{k+2-j}} \asymp \log \log \frac{1}{x_{k}}
\end{equation}
in view of \eqref{eq:loglog_compare}. On $\Omega_4' \cap \Omega_4''$, since $a_k=O(1)$, we have
\begin{eqnarray}
\|f\|_{L^2(\Omega_4' \cap \Omega_4'')}
 &\lesssim& \left\|\frac{1}{\cdot - x_k}\right\|_{L^2(\Omega_4'')}+\left\|\frac{1}{\cdot - x_{k+1}}\right\|_{L^2(\Omega_4')}+\left\|\frac{1}{\cdot - x_{k-1}}\right\|_{L^2\left(\left\{x_{k-1}/2\le |z-x_{k-1}| \le 2x_{k-1}\right\}\right)}\nonumber\\
 &\lesssim& \left\|\frac{1}{\cdot - x_k}\right\|_{L^2\left(\left\{x_{k}/2\le |z-x_{k}| \le 3x_{k-1}\right\}\right)}+\left\|\frac{1}{\cdot - x_{k+1}}\right\|_{L^2\left(\left\{x_{k-1}/2\le |z-x_{k+1}| \le 3x_{k-1}\right\}\right)}\nonumber\\
 & & +\left\|\frac{1}{\cdot - x_{k-1}}\right\|_{L^2\left(\left\{x_{k-1}/2\le |z-x_{k-1}| \le 2x_{k-1}\right\}\right)}\nonumber\\
 &\lesssim& \left(\log \frac{x_{k-1}}{x_k}\right)^{\frac{1}{2}}+\left(\log \frac{x_{k-1}}{x_{k+1}}\right)^{\frac{1}{2}}+(\log 4)^{\frac{1}{2}}\nonumber\\
 &\lesssim& \left(\log \log \frac{1}{x_k}\right)^{\frac{1}{2}}. \label{eq5.19}
\end{eqnarray}
Moreover, if $z\in\Omega_{4,m}$, we infer from \eqref{eq5.16} that
\[
|f(z)|\lesssim \frac{1}{m^2x_{k-1}},
\]
so that
\begin{equation}\label{eq5.20}
\int_{\Omega_{4,m}} |f|^2 \lesssim \frac{|\Omega_{4,m}|}{m^4x_{k-1}^2} \asymp \frac{1}{m^3}.
\end{equation}
By \eqref{eq5.18} \eqref{eq5.19} and \eqref{eq5.20}, we conclude that
\[
\int_{\Omega}|f|^2 \lesssim \log \log \frac{1}{x_k}.
\]
This together with \eqref{eq5.3} and \eqref{eq5.17} imply that
\begin{equation}\label{eq:b_lower_beta}
b_{\Omega}(w) \geq\frac{|f'(w)|/\|f\|_{L^2(\Omega)}}{K_\Omega(w)^{\frac{1}{2}}} \gtrsim \frac{1}{x_k}, \ \ \ \forall \, x_k/3<|w|<2x_k/3.
\end{equation}
By using the same method in \ref{subsec:alpha}/(iii), we conclude from \eqref{eq:b_lower_beta} and \eqref{eq5.2} that
\begin{equation}\label{eq5.21}
d_{\Omega}(-x_1,-x) \gtrsim k \asymp \frac{\log \frac{1}{x}}{\log \log \frac{1}{x}}.
\end{equation}

(iv) It remains to find an upper estimate for $b_\Omega$, which is analogous to the proof of \eqref{eq4.21}. Take $x\in (x_{k+1},x_k)$ and $f\in A^2(\Omega)$ with $f(-x)=0$. As in \eqref{eq4.17}, \eqref{eq4.18}, \eqref{eq4.19} and \eqref{eq4.20}, we have,
\begin{equation}\label{eq5.10}
|f'(-x)| \lesssim I_4+I_5+I_6,
\end{equation}
where
\begin{eqnarray}
I_4 &:=& \int_{\frac{3}{4}<|z|<\frac{4}{5}} \frac{|f(z)|}{|z+x|^2}\frac{|z|}{x} \left|  \frac{\partial \varphi_0}{\partial \bar{z}} \right| \lesssim \frac{1}{x}\|f\|_{L^2(\Omega)},\label{eq5.11}\\
I_5 &:=& \sum_{l=1}^{k+1}\int_{A_l} \frac{|f(z)|}{|z+x|^2}\frac{|z|}{x}\left|  \frac{\partial \varphi_l}{\partial \bar{z}} \right| \nonumber \\
 &\lesssim& \frac{x_{k+1}}{x^3} \|f\|_{L^2(\Omega)} \left\| \frac{\partial \varphi_{k+1}}{\partial \bar{z}} \right\|_{L^2(\mathbb{C})}+\sum_{l=1}^k \frac{1}{x_lx}\|f\|_{L^2(\Omega)} \left\|\frac{\partial \varphi_l}{\partial \bar{z}}\right\|_{L^2(\mathbb{C})}\nonumber\\
 &\lesssim& \left(\frac{x_{k+1}}{x^3(\log \log \frac{1}{x_k})^{\frac{1}{2}}} +\frac{1}{xx_k(\log\log \frac{1}{x_k})^{\frac{1}{2}}} \right)\|f\|_{L^2(\Omega)}\nonumber\\
&\lesssim& \left(\frac{x_{k+1}}{x^2}+\frac{1}{x_k}\right)\cdot \frac{\|f\|_{L^2(\Omega)}}{x(\log \log \frac{1}{x_k})^{\frac{1}{2}}}, \label{eq5.12}
\end{eqnarray}
and
\begin{eqnarray}
\notag I_6&:=&\int_{\tilde{A}_k}\frac{|f(z)|}{|z+x|^2}\frac{|z|}{x}\left|\frac{\partial \widetilde{\varphi}_k}{\partial \bar{z}}\right|\\
\notag &\lesssim& \frac{x_{k+1}}{x^3} \|f\|_{L^2(\Omega)} \left\| \frac{\partial \widetilde{\varphi}_k}{\partial \bar{z}} \right\|_{L^2(\Omega)}\\
&\lesssim& \frac{x_{k+1}}{x^2} \cdot \frac{\|f\|_{L^2(\Omega)}}{x(\log \log \frac{1}{x_k})^{\frac{1}{2}}},\label{eq5.13}
\end{eqnarray}
in view of \eqref{eq4.7}, \eqref{eq4.8}, \eqref{eq5.1}, \eqref{eq:loglog_compare}, \eqref{eq5.2} and \eqref{eq:double_beta}. By \eqref{eq5.10}, \eqref{eq5.11}, \eqref{eq5.12} and \eqref{eq5.13}, we have
\[
\frac{|f'(-x)|}{\|f\|_{L^2(\Omega)}} \lesssim \left( \frac{x_{k+1}}{x^2}+\frac{1}{x_k} \right) \cdot \frac{1}{x(\log \log \frac{1}{x_k})^{\frac{1}{2}}}
\]
for all $f\in{A^2(\Omega)}$ with $f(-x)=0$. By using \eqref{eq5.9}, it follows that
\[
b_{\Omega}(-x) \lesssim \frac{x_{k+1}}{x^2}+\frac{1}{x_k}, \ \ \  \forall \, x\in (x_{k+1},x_k),
\]
and hence
\[
d_{\Omega}(-x_{k+1},-x_k) \lesssim \int_{x_{k+1}}^{x_k} \left(\frac{x_{k+1}}{x^2}+\frac{1}{x_k}\right) =2-\frac{2x_{k+1}}{x_k}\le 2.
\]
Finally, we infer from \eqref{eq5.2} that
\begin{equation} \label{eq5.14}
d_{\Omega}(-x_1,-x) \lesssim k \asymp \frac{\log \frac{1}{x}}{\log \log \frac{1}{x}}, \ \ \  \forall \, x \in (x_{k+1},x_k).
\end{equation}

\section{Weakly Uniform Perfect Domains}\label{sec:weak_uniform_perfect}

\begin{proof}[Proof of Theorem \ref{th:weakly_uniform_perfect}]
We follow the idea in Pommerenke \cite{Pommerenke}. To simplify notations, let us denote
\[
h_1(r)=h_{1,\alpha}(r)= r^{\alpha}\ \ \ \text{and}\ \ \ h_2(r)=h_{2,\beta}(r):= r\left(\log\frac{1}{r}\right)^{-\beta}.
\]
Suppose that conditions $(U)_{1,\alpha}$ or $(U)_{2,\beta}$ fail. Then for any $c>0$ and $r_0>0$, there exists $a\in \partial \Omega$ and $r\in (0,r_0)$ such that
\[
\partial \Omega \cap \{ z\in \mathbb{C};\ c\cdot{h}_i(r)  \leq |z-a| \leq r \}\neq \emptyset,
\]
where $i=1$ or $2$. Thus for $r<\min \{r_1,\mathrm{diam}(\Omega)\}$, we must have
\[
\overline{D(a,r)}\setminus\Omega\subset D(a,c\cdot{h}_i(r)).
\] 
Recall that the capacity of a disc equals to its radius. By the compactness of $\overline{D(a,r)}\setminus\Omega$, we have
\[
\mathrm{Cap}(\overline{D(a,r)}\setminus\Omega)<c\cdot{h}_i(r),
\]
However, the constant $c$ can be arbitrarily small, so that $(C)_{1,\alpha}$ and $(C)_{2,\beta}$ would not hold. This proves $(C)_{1,\alpha}\Rightarrow(U)_{1,\alpha}$ and $(C)_{2,\beta}\Rightarrow(U)_{2,\beta}$.

For the other side, we fix some $a\in\Omega$, and take a sequence $\{s_k\}^{\infty}_{k=1}\subset(0,\infty)$ with $0<s_1\ll1$, so that
\begin{equation}\label{eq2.1}
s_{k+1}:=\frac{c}{5}h_i(s_k) \le \frac{1}{2}s_k.
\end{equation}
By the conditions $(U)_{1,\alpha}$ and $(U)_{2,\beta}$, for any positive integer $k$ and $z\in \partial \Omega$, there exists $\varphi_k(z)\in  \partial \Omega$, such that
\begin{equation}\label{eq2.2}
5s_{k+1}\le |\varphi_k(z)-z| \le s_k.
\end{equation}
For $k=1,2,\cdots$, let
\[
J_k:=\left\{(j_1, \cdots, j_k);\ j_l\in\{0,1\}, l=1,2,\cdots,k \right\}
\]
and $J_0:=\emptyset$.
We define a map
\[
\omega: \bigcup_{k=0}^{\infty} J_k \to \partial \Omega,
\]
inductively, with $\omega(\emptyset)=a \in \partial \Omega$ and
\begin{equation}\label{eq:omega_phi_k}
\omega(j_1,\cdots, j_{k+1})=\begin{cases}
\omega(j_1,\cdots, j_k), &\text{if}\ j_{k+1}=0, \\
\varphi_k(\omega(j_1,\cdots, j_k)), &\text{if}\ j_{k+1}=1.\\
\end{cases}
\end{equation}
Set $E_k:=\omega(J_k)\subset \Omega$. For $z=\omega(j_1, \cdots, j_k), z'=\omega(j_1', \cdots, j_k') \in E_k$ with $(j_1, \cdots, j_k)\ne (j_1',\cdots, j_k')$, we denote $m:=m(z,z')$ the maximal integer with $j_l=j_l', 1\le l \le m$, and consider $z^*=\omega(j_1, \cdots, j_m)$. We may assume that $j_{m+1}=0$ and $j_{m+1}'=1$. By \eqref{eq2.1}, \eqref{eq2.2} and \eqref{eq:omega_phi_k}, we see that
\begin{eqnarray*}
|z-z^*| &=& |\omega(j_1, \cdots, j_k)-\omega(j_1, \cdots, j_{m+1})| \\
&=&  \left|\sum_{t=m+1}^{k-1}\omega(j_1, \cdots, j_{t+1})-\omega(j_1, \cdots, j_{t})\right|\\
&\le& s_{m+1}+s_{m+2}+\cdots+s_{k-1}\\
&\le& s_{m+1}\left(1+\frac{1}{2}+\cdots+\frac{1}{2^{k-m-2}}\right)\\
&\le& 2s_{m+1}.
\end{eqnarray*}
Similarly, $|z'-\varphi_m(z^*)| \le 2s_{m+1}$. Hence
\begin{eqnarray}
|z-z'| &\ge& |z^*-\varphi_m(z^*)|-|z-z^*|-|z'-\varphi_m(z^*)|\nonumber\\
&\ge& 5s_{m+1}-4s_{m+1}\nonumber\\
&=&s_{m+1}\label{eq:z_z'}\\
&>&0.\nonumber
\end{eqnarray}
In particular, $E_k$ has exactly $2^k$ elements. Moreover, for any fixed $z\in E_k$, there are $2^{k-l-1}$ points $z'\in E_k$ with $m(z,z')=l$. It follows from \eqref{eq:z_z'} that
\begin{equation}\label{eq:transfinite_1}
\prod_{z'\in E_k,z'\ne z} |z-z'|\ge \prod_{l=0}^{k-1}(s_{l+1})^{2^{k-l-1}}.
\end{equation}
Since
\begin{eqnarray*}
|z-a|&\le& s_1+s_2+\cdots+s_k \\
&\le& s_1(1+\frac{1}{2}+\frac{1}{2^2} +\cdots+\frac{1}{2^k-1}) \\
&\le& 2s_1,
\end{eqnarray*}
for any $z\in E_k$, we have $E_k \subset \overline{D(a,2s_1)}\setminus\Omega$. By \eqref{eq:transfinite_1}, the $2^k-$th diameter $\delta_{2^k}(\overline{D(a,2s_1)}\setminus\Omega)$ satisfies
\begin{eqnarray*}
\log \delta_{2^k}(\overline{D(a,2s_1)}\setminus\Omega)&=&\log \left(  \sup_{z_1,\cdots,z_{2^k}\in \overline{D(a,2r_1)}\setminus\Omega}\prod_{\mu=1}^{2^k} \prod_{\nu=1,\nu \ne \mu}^{2^k}|z_{\mu}-z_{\nu}|^{\frac{1}{2^k\cdot (2^k-1)}}  \right)\\
&\ge&\log \left( \prod_{l=0}^{k-1}(s_{l+1})^{2^{k-l-1}\cdot\frac{1}{2^k\cdot (2^k-1)} } \right)^{2^k}\\
&=& \frac{1}{ (2^k-1)} \sum_{l=0}^{k-1} 2^{k-l-1}\log s_{l+1}.
\end{eqnarray*}
Thus the Fekete-Szeg\"{o} theorem implies that
\begin{eqnarray} \label{eq2.3}
\notag \log \mathrm{Cap}(\overline{D(a,2s_1)}\setminus\Omega)&=&\lim_{n\to \infty}\log \delta_n(\overline{D(a,2s_1)}\setminus\Omega)\\
&\ge& \sum_{l=0}^{\infty} \frac{\log s_{l+1}}{2^{l+1}}.
\end{eqnarray}

The above argument holds for both $h_i$ ($i=1,2$). When $i=1$, i.e.,
\[
s_{k+1}=\frac{c}{5}s_k^\alpha,
\]
we have
\[
\log s_{k+1}=\alpha^k \log s_1  + \frac{\alpha^k-1}{\alpha-1}\log \frac{c}{5},
\]
so that
\[
\log \mathrm{Cap}(\overline{D(a,2s_1)}\setminus\Omega) \ge \frac{1}{\alpha}\left(\sum_{l=0}^{\infty} \left( \frac{\alpha}{2} \right)^{l+1} \right)\cdot  \log s_1 +O(1),
\]
in view of \eqref{eq2.3}. Thus
\[
\mathrm{Cap}(\overline{D(a,2s_1)}\setminus\Omega) \gtrsim (2s_1)^{\frac{1}{2-\alpha}},
\]
when $0<s_1\ll1$, and the implicit constant does not depends on $a$. This proves $(U)_{1,\alpha}\Rightarrow(C)_{1,(2-\alpha)^{-1}}$.

Suppose that $i=2$. We notice that for $0<t\ll1$,
\[
\log\frac{5}{ct}+\beta\log\log\frac{1}{t}<2\log\frac{1}{t}.
\]
When $0<s_1\ll1$, by using \eqref{eq2.1} repeatedly
\begin{eqnarray*}
s_{k+1} &=& \frac{c}{5}s_k\left(\log \frac{1}{s_k}\right)^{-\beta}\\
&=& \left(\frac{c}{5}\right)^2s_{k-1}\left(\log \frac{1}{s_{k-1}}\right)^{-\beta} \cdot \left( \log \frac{5}{cs_{k-1}}+\beta \log \log \frac{1}{s_{k-1}} \right)^{-\beta}\\
&\geq& \left(\frac{c}{5}\right)^2 \cdot 2^{-\beta} s_{k-1}\left(\log \frac{1}{s_{k-1}}\right)^{-2\beta}\\
&=& \left(\frac{c}{5}\right)^3 \cdot 2^{-\beta} s_{k-2}\left(\log \frac{1}{s_{k-2}}\right)^{-\beta}\cdot \left(  \log \frac{5}{cs_{k-2}}+\beta \log \log \frac{1}{s_{k-2}}       \right)^{-2\beta}\\
&\geq&  \left( \frac{c}{5} \right)^3\cdot 2^{-(\beta+2\beta)} s_{k-2}\left(\log \frac{1}{s_{k-2}}\right)^{-3\beta}\\
&\cdots&\\
&\geq& \left( \frac{c}{5} \right)^k\cdot 2^{-(1+2+\cdots+k-1)\beta}s_1\left(\log \frac{1}{s_1}\right)^{-k\beta}\\
&=& \left( \frac{c}{5} \right)^k\cdot 2^{-\frac{k(k-1)}{2}\beta}s_1\left(\log \frac{1}{s_1}\right)^{-k\beta}.
\end{eqnarray*}
This together with \eqref{eq2.3} imply that
\begin{eqnarray*}
\log \mathrm{Cap}(\overline{D(a,2s_1)}\setminus\Omega)&\ge& \sum_{l=0}^{\infty}\frac{1}{2^{l+1}}\left[ \log s_1-l\log\left[\frac{c}{5}\left(\log \frac{1}{s_1}\right)^{-\beta}\right]-\frac{\beta}{2}l(l-1)\log 2 \right]\\
&=&\left( \sum_{l=0}^{\infty}\frac{1}{2^{l+1}} \right)\cdot \log s_1 + \left( \sum_{l=0}^{\infty}\frac{l}{2^{l+1}} \right)\cdot \log\left[\frac{c}{5}\left(\log \frac{1}{s_1}\right)^{-\beta}\right]+O(1)\\
&=& \log\left[\frac{c}{5}s_1\left(\log \frac{1}{s_1}\right)^{-\beta}\right]+O(1).
\end{eqnarray*}
Thus $(C)_{2,\beta}$ holds.
\end{proof}

Let us consider the Cantor-type set $\mathcal{C}$ defined as follows. Given a sequence $\{l_j\}^\infty_{j=0}$ of positive numbers with $l_{j+1}<l_j/2$, set $\mathcal C_0:=[0,l_0]$ and define $\mathcal C_j$ to be a union of $2^j$ closed intevals inductively, such that $\mathcal C_j$ is obtained by removing from the middle of each inteval in $\mathcal C_{j-1}$ an open subinteval whose length is $l_{j-1}-2l_j$. For example, $\mathcal C_1=[0,l_1]\cup[l_0-l_1,l_0]$, $\mathcal C_2=[0,l_2]\cup[l_1-l_2,l_1]\cup[l_0-l_1,l_0-l_1+l_2]\cup[l_0-l_2,l_0]$, etc. Write
\[
\mathcal C_j=\bigcup^{2^j}_{k=1}I_{j,k},
\]
where every $I_{j,k}$ is a closed inteval of length $l_j$, lying on the left of $I_{j,k+1}$. We set
\[
\mathcal C:=\bigcap^\infty_{j=0} \mathcal C_j.
\]
The (logarithm) capacity of $\mathcal{C}$ satisfies (cf. \cite[Theprem 5.3.7]{Ransford})
\begin{equation}\label{eq:capacity_Cantor}
\mathrm{Cap}(\mathcal{C})\leq\frac{1}{2}\prod^\infty_{j=0}\left(\frac{2l_{j+1}}{l_j}\right)^{1/2^j}.
\end{equation}
Suppose that $l_{j+1}=l_j^\alpha$ and $0<l_0\ll1$. Then $l_j=l_0^{\alpha^j}$ and
\[
\left(\frac{2l_{j+1}}{l_j}\right)^{1/2^j}=2l_0^{\frac{(\alpha/2)^j}{\alpha-1}}.
\]
Thus it follows from \eqref{eq:capacity_Cantor} that $\mathrm{Cap}(\mathcal{C})=0$ when $\alpha\geq2$, i.e., $\mathcal{C}$ is a polar set. On the other hand, we have
\begin{proposition}\label{prop:U1_Cantor}
$\Omega:=\mathbb{C}\setminus\mathcal{C}$ satisfies $(U)_{1,\alpha}$. 
\end{proposition}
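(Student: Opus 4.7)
The plan is to verify $(U)_{1,\alpha}$ directly from the self-similar structure of $\mathcal{C}=\partial\Omega$. The core intermediate claim is the following scale-by-scale \emph{neighbor property}: for every $a\in\mathcal{C}$ and every $j\geq 0$,
\[
\mathcal{C}\cap\bigl\{z\in\mathbb{C};\ \tfrac{1}{2}l_j\leq|z-a|\leq l_j\bigr\}\neq\emptyset.
\]
Granting this, the proposition follows quickly with $c:=1/2$ and $r_0:=l_0$: given $a\in\mathcal{C}$ and $r\in(0,l_0)$, pick the unique $j\geq 1$ with $l_j\leq r<l_{j-1}$; the claim supplies $b\in\mathcal{C}$ with $|b-a|\leq l_j\leq r$, and the recursion $l_j=l_{j-1}^\alpha$ combined with $r<l_{j-1}$ gives $|b-a|\geq l_j/2>r^\alpha/2=c\cdot h_{1,\alpha}(r)$, which is exactly $(U)_{1,\alpha}$.

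To establish the neighbor property I would use that $a\in\bigcap_{j\geq 0}\mathcal{C}_j$, so for each $j$ there is a unique index $k$ with $a\in I_{j,k}$. By construction, $I_{j,k}$ contains precisely two intervals of $\mathcal{C}_{j+1}$: the left child $I_{j+1,2k-1}$ flush with the left endpoint of $I_{j,k}$ and the right child $I_{j+1,2k}$ flush with the right endpoint, separated by an open gap of length $l_j-2l_{j+1}$. Since $a\in\mathcal{C}_{j+1}$, $a$ lies cleanly in one of these two children, say $I_{j+1,2k-1}$ (the other case is symmetric). The opposite endpoint of $I_{j,k}$ then lies in $\mathcal{C}$ (because endpoints of $\mathcal{C}_m$-intervals survive all subsequent dissections) and its distance to $a$ lies in $[l_j-l_{j+1},\,l_j]$. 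Choosing $l_0$ so small that $l_{j+1}/l_j=l_j^{\alpha-1}\leq l_0^{\alpha-1}\leq 1/2$ for all $j$, this range is contained in $[l_j/2,\,l_j]$ and supplies the desired $b$.

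No serious obstacle is anticipated; the argument is just a direct exploitation of the Cantor self-similarity. The essential phenomenon is the matching between the Cantor scaling $l_{j-1}\mapsto l_j=l_{j-1}^\alpha$ and the target exponent in $h_{1,\alpha}$: because the two consecutive Cantor scales enclosing $r$ differ by exactly the power $\alpha$, the forbidden inner disc $\{|z-a|<cr^\alpha\}$ is strictly smaller than $\{|z-a|<l_j/2\}$, so the neighbor supplied at the $j$-th scale automatically avoids it. The only mild bookkeeping concerns the edge case $a=p+l_{j+1}$ at the inner boundary of $I_{j+1,2k-1}$, but the bound $|b-a|\geq l_j-l_{j+1}$ already accounts for this worst position of $a$, so the proof goes through unchanged.
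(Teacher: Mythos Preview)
Your argument is correct. The neighbor property is sound: the far endpoint of the level-$j$ interval $I_{j,k}$ containing $a$ lies in $\mathcal{C}$ and sits at distance in $[l_j-l_{j+1},l_j]\subset[l_j/2,l_j]$ from $a$ once $l_0^{\alpha-1}\le 1/2$, and your deduction of $(U)_{1,\alpha}$ from it via $l_j=l_{j-1}^\alpha>r^\alpha$ is clean.

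Your route differs from the paper's. The paper fixes $r$, chooses $j$ with $l_{j+1}<r/2\le l_j$, and argues by a two-step contradiction: first, if $I_{j+1,k}\cap\{cr^\alpha\le|x-a|\le r\}=\emptyset$ then $I_{j+1,k}$ would fit inside the inner disc, impossible since $2cr^\alpha<l_{j+1}$; second, if the annulus met $I_{j+1,k}$ but missed $\mathcal{C}$, one of the half-annuli $[a-r,a-cr^\alpha]$ or $[a+cr^\alpha,a+r]$ would lie in a gap of $I_{j+1,k}\setminus\mathcal{C}$, impossible since its length $r-cr^\alpha>r/2>l_{j+1}$. Your approach is more constructive and transparent: you simply exhibit a specific Cantor point (an interval endpoint) in the annulus, bypassing the contradiction and the gap-length bookkeeping. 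The paper's argument yields the slightly smaller constant $c<2^{-1-\alpha}$, while yours gives $c=1/2$; both are fine for the qualitative statement. The underlying mechanism---the match between the Cantor recursion $l_j=l_{j-1}^\alpha$ and the exponent in $h_{1,\alpha}$---is the same in both proofs.
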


\begin{proof}
Clearly, we have $\partial\Omega=\mathcal{C}$. It suffices to find constants $c>0$ and $r_0>0$, such that
\[
\mathcal C\cap\{x\in\mathbb{R};\ cr^\alpha\leq|x-a|\leq r\}\neq\emptyset
\]
for all $a\in{\mathcal C}$ and $0<r<r_0$. We take $0<r_0<2l_0$. It follows that for any $0<r<r_0$, there exists an integer $j\geq0$, such that $l_{j+1}<r/2\leq{l_j}$. Fix $r$ and $j$. By definition, for any $a\in\mathcal{C}$, there is an integer $k$ such that $a\in{I_{j+1,k}}$. We claim that
\begin{equation}\label{eq:nonemptyintersection}
  I_{j+1,k}\cap\{x\in\mathbb{R};\ cr^\alpha\leq|x-a|\leq r\}\neq\emptyset
\end{equation}
for some $0<c<2^{-1-\alpha}$. To see this, suppose on the contrary that the intersection in \eqref{eq:nonemptyintersection} is empty. Since $a\in{I_{j+1,k}}$, we must have $I_{j+1,k}\subset(a-cr^\alpha,a+cr^\alpha)$. However, we infer from the choice of $j$ and $c$ that
\[
2cr^\alpha \leq 2^{1+\alpha}cl_j^\alpha = 2^{1+\alpha}cl_{j+1} < l_{j+1}
\]
i.e., the length of $(a-cr,a+cr)$ is less than the length of $I_{j+1,k}$, which is a contradiction.

In view of \eqref{eq:nonemptyintersection}, we see that if $\mathcal C\cap\{x\in\mathbb{R};\ cr^\alpha\leq|x-a|\leq r\}=\emptyset$, then
\[
(I_{j+1,k}\setminus{\mathcal C})\cap\{x\in\mathbb{R};\ cr^\alpha\leq|x-a|\leq r\}\neq\emptyset.
\]
Thus either $[a-r,a-cr^\alpha]$ or $[a+cr^\alpha,a+r]$ must be contained in a connected component of $I_{j+1,k}\setminus{\mathcal C}$. In particular, one of these two open intevals is contained in $I_{j+1,k}$. However, since $\alpha>1$, we have $c<1/2$. Thus the length of $[a-r,a-cr^\alpha]$ and $[a+cr^\alpha,a+r]$ both equal to $r-cr^\alpha$, which is no less than $r-r^\alpha/2>r/2>l_{j+1}$. This leads to a contradiction.
\end{proof}

In the remaining part of this section, let us verify the proof of Proposition \ref{prop:Zalcman_weak_uniformly_perfect}. Again, we denote $h_1=h_{1,\alpha}$ and $h_2=h_{2,\beta}$ for simplicity.

\begin{proof}
Let $\Omega$ be the Zalcman-type domain defined by $h_1$. We first show that $\Omega$ satisfies the condition $(U)_{1,\alpha}$. Recall that $h_1(r)=r^\alpha$ and $x_{k+1}=r_{k+1}=h_1(x_k)$. For simplicity, we may assume that $x_1$ is sufficiently small, so that
\begin{equation}\label{eq:h_1_weakly_uniform_perfect}
h_1(r)<\frac{r}{6}<\frac{r}{2},\ \ \ \forall\,0<r<x_1,
\end{equation}
i.e., $x_{k+1}=r_k<x_k/6<x_k/2$. Take $a\in\partial\Omega$. We shall divide the argument into three cases:

(i) $a=0$. Let $0<r<x_1$. Then there exists an integer $k$, such that $x_{k+1}+r_{k+1}<r\le x_k+r_k$. If $x_{k}-r_k<r<x_{k}+r_k$, then clearly
\[
\{ z;\ h_1(r)\leq|z|\leq r  \} \cap \partial \Omega \ne \emptyset.
\]
On the other hand, if $x_{k+1}+r_{k+1}<r\leq x_k-r_k$, since $h_1$ is increasing, we see that
\[
h_1(r) \le h_1(x_k)=x_{k+1} \le x_{k+1}+r_{k+1},
\]
so that
\[
x_{k+1}+r_{k+1}\in\{z;\ h_1(r)\le |z|\le r  \} \cap \partial \Omega.
\]

(ii) $a\in\partial{D}(0,1)$. Then we have
\begin{eqnarray*}
& & \{ z;\ h_1(r)\leq |z-a|\leq r  \} \cap \partial \Omega\\
&\supset& \{ z;\ h_1(r)\leq |z-a|\leq r  \} \cap \partial D(0,1)\\
&\neq& \emptyset
\end{eqnarray*}
when $0<r<1$.

(iii) $a\in\partial{D}(x_k,r_k)$ for some $k$. We fix a positive integer $k_0$, and let $0<r<x_{k_0}/2$. It follows that $h_1(r)<h_1(x_{k_0})=r_{k_0}$, and hence
\[
\{ z;\ h_1(r)\leq |z-a|\leq r  \} \cap \partial D(x_k,r_k)\neq\emptyset,\ \ \ 1\leq{k}\leq{k_0}.
\]
Next, suppose that $k>k_0$. For any $r\in(0,x_{k_0}/2)$, we can always find $x_m$, with $k_0\leq{m}<k$, such that $x_{m+1}-x_k<r\leq x_m-x_k$. It follows that
\[
\frac{1}{2}h_1(r)\leq\frac{1}{2}h_1(x_m)=\frac{1}{2}x_{m+1}.
\]
If $m<k-1$, i.e., $m+1<k$, we have
\[
x_{m+1}-r_{m+1}-x_k>x_{m+1}-\frac{1}{2}x_{m+1}>0.
\]
Thus
\begin{eqnarray*}
|x_{m+1}-r_{m+1}-a| &=& |x_{m+1}-r_{m+1}-x_k-(a-x_k)|\\
&\geq&  x_{m+1}-r_{m+1}-x_k-|a-x_k|\\
&=& x_{m+1}-r_{m+1}-x_k-r_k\\
&>& x_{m+1}-\frac{3}{6}x_{m+1}\\
&=& \frac{1}{2}x_{m+1}\\
&>& \frac{1}{2}h_1(r),
\end{eqnarray*}
in view of \eqref{eq:h_1_weakly_uniform_perfect}, and
\begin{eqnarray*}
|x_{m+1}-r_{m+1}-a| &=& |x_{m+1}-r_{m+1}-x_k-(a-x_k)|\\
&\leq&  x_{m+1}-r_{m+1}-x_k+|a-x_k|\\
&=& x_{m+1}-r_{m+1}-x_k+r_k\\
&<& x_{m+1}-x_k\\
&<& r.
\end{eqnarray*}
That is
\[
x_{m+1}-r_{m+1}\in\left\{z;\ \frac{1}{2}h_1(r)\leq|z-a|\leq{r}\right\}\cap\partial\Omega.
\]
If $m=k-1$, we have $0<r\leq{x_{k-1}-x_k}$. If $x_k+r_k<r\leq{x_{k-1}-x_k}$, we have
\[
\frac{h_1(r)}{2}<\frac{x_k}{2}<x_k-r_k<|0-a|<x_k+r_k<r,
\]
i.e.,
\[
0\in\left\{z;\ \frac{1}{2}h_1(r)\leq|z-a|\leq{r}\right\}\cap\partial\Omega,
\]
while if $r\leq{x_k+r_k}$, we have
\[
h_1(r)\leq{h_1(x_k+r_k)}=(x_k+r_k)^\alpha<2^\alpha{x_k^\alpha}=2^\alpha{r_k},
\]
and hence
\begin{eqnarray*}
& & \left\{ z;\ \frac{1}{2^\alpha}h_1(r) \leq |z-a|\leq r  \right\} \cap \partial \Omega\\
&\supset& \left\{ z;\ \frac{1}{2^\alpha}h_1(r)\leq |z-a|\leq r  \right\} \cap \partial D(x_k,r_k)\\
&\neq& \emptyset.
\end{eqnarray*}
Thus we have proved that $\Omega$ satisfies the condition $(U)_{1,\alpha}$.

Next, let us verify that $\Omega$ does not satisfy the condition $(U)_{1,\alpha-\varepsilon}$. To see this, we notice that
\[
\frac{1}{k}\left(\frac{x_k}{2}\right)^{\alpha-\varepsilon}>2x_k^\alpha=2x_{k+1}
\]
when $k\gg1$. Thus
\begin{eqnarray*}
& & \left\{ z;\ \frac{1}{k}\left(\frac{x_k}{2}\right)^{\alpha-\varepsilon} \leq |z| \leq \frac{x_k}{2}  \right\} \cap \partial \Omega\\
&\subset& \left\{ z;\ 2x_{k+1} \leq |z| \leq \frac{x_k}{2}  \right\} \cap \partial \Omega\\
&=& \emptyset.
\end{eqnarray*}
This completes the proof of (1).

The proof of (2) is completely analogous, and we leave it for the readers.
\end{proof}

\section{Proof of Theorem \ref{th:Bergman_kernel}}\label{sec:main}

Let $E\subset\mathbb{C}$ be a non-polar compact subset and $\mu_E$ its equilibrium measure. Following \cite{Zwonek,PflugZwonek}, we set
\begin{equation}\label{eq:f_E_def}
f_E(w):=\int_E\frac{d\mu_E(\zeta)}{w-\zeta}.
\end{equation}
It follows that $f_E$ is a holomorphic function on $\mathbb{C}\setminus{E}$. Moreover, Lemma 2 in \cite{PflugZwonek} indicates that

\begin{lemma}\label{lm:f_E_integral}
If $E$ is a non-polar compact subset in $D(0,1/4)$, then 
\[
\int_{D(0,1/4)\setminus{E}}|f_E|^2\lesssim \log \frac{1}{\mathrm{Cap}(E)}.
\]
\end{lemma}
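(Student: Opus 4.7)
The plan is to reduce the question to a boundary flux estimate via the logarithmic potential. Observing that $f_E(w) = 2\partial_w p_{\mu_E}(w)$, one has $|f_E|^2 = |\nabla p_{\mu_E}|^2$ on $\mathbb{C}\setminus E$. I would introduce $u := p_{\mu_E} - \log\mathrm{Cap}(E)$. By Frostman's theorem, $u \geq 0$ on $\mathbb{C}$ and $u = 0$ quasi-everywhere on $E$, hence $\mu_E$-a.e.; further $\Delta u = 2\pi\mu_E$ distributionally and $u$ is harmonic off $E$. Via an approximation argument, Green's identity on $D(0,1/4)$ combined with $\int u\,\Delta u\,dA = 2\pi\int u\,d\mu_E = 0$ yields
\[
\int_{D(0,1/4)\setminus E}|f_E|^2\,dA \;=\; \int_{\partial D(0,1/4)} u\,\frac{\partial u}{\partial n}\,d\sigma.
\]

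Next I would bound the flux by two easy observations. Since $E$ is compact in the open disk, fix $r < 1/4$ with $E \subset \overline{D(0,r)}$. For $w = (1/4)e^{i\theta}$ and $\zeta \in E$ one computes
\[
\frac{\partial}{\partial r}\log|w-\zeta| \;=\; \frac{1/4 - \mathrm{Re}(\zeta e^{-i\theta})}{|w-\zeta|^2} \;>\; 0,
\]
so integrating against $\mu_E$ shows $\partial u/\partial n > 0$ on $\partial D(0,1/4)$, and Gauss gives
\[
\int_{\partial D(0,1/4)}\frac{\partial u}{\partial n}\,d\sigma \;=\; 2\pi\mu_E(E) \;=\; 2\pi.
\]
Pointwise, $p_{\mu_E}(w) \leq \log(|w|+r) \leq \log(1/2)$ on the circle yields $u(w) \leq \log(1/(2\mathrm{Cap}(E)))$ there, and therefore
\[
\int_{\partial D(0,1/4)} u\,\frac{\partial u}{\partial n}\,d\sigma \;\leq\; 2\pi\log\frac{1}{2\,\mathrm{Cap}(E)} \;\lesssim\; \log\frac{1}{\mathrm{Cap}(E)},
\]
using $\mathrm{Cap}(E) \leq \mathrm{Cap}(\overline{D(0,1/4)}) = 1/4$.

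The main obstacle is justifying Green's identity when $E$ is not Dirichlet-regular, so that $u$ need not vanish at every point of $E$. I would handle this by exhaustion: set $U_\epsilon := D(0,1/4)\setminus\overline{E}_\epsilon$ where $\overline{E}_\epsilon$ is the closed $\epsilon$-neighbourhood of $E$ (with rectifiable boundary for a.e.\ $\epsilon$). On $U_\epsilon$, $u$ is smooth and harmonic, so Green's identity produces the outer boundary term above minus the inner term $\int_{\partial\overline{E}_\epsilon} u\,(\partial u/\partial n^*)\,d\sigma$. As $\epsilon \to 0$, the total inner flux $\int_{\partial\overline{E}_\epsilon}(\partial u/\partial n^*)\,d\sigma = 2\pi\mu_E(\overline{E}_\epsilon)$ stays bounded by $2\pi$, while $u$ tends to $0$ on $\partial\overline{E}_\epsilon$ (uniformly if $E$ is regular; otherwise by upper semicontinuity of $u$ together with $u = 0$ q.e.\ on $E$, after a standard capacitary/mollification argument), so the inner term vanishes in the limit; monotone convergence handles $\int_{U_\epsilon}|\nabla u|^2 \to \int_{D(0,1/4)\setminus E}|\nabla u|^2$, completing the identification and thus the proof.
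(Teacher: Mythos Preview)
The paper does not prove this lemma; it is simply cited as Lemma~2 of Pflug--Zwonek \cite{PflugZwonek}. Your proposal supplies an actual argument, and the strategy---identifying $|f_E|^2=|\nabla p_{\mu_E}|^2$, setting $u=p_{\mu_E}-\log\mathrm{Cap}(E)$, and reducing to the boundary flux $\int_{\partial D(0,1/4)}u\,\partial_n u$ via Frostman---is sound. The positivity of $\partial_n u$ on $\partial D(0,1/4)$, the total-flux identity $\int_{\partial D(0,1/4)}\partial_n u\,d\sigma=2\pi$, and the pointwise bound $u\le\log(1/(2\,\mathrm{Cap}(E)))$ on that circle are all correct.

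The one genuine gap is your treatment of the inner boundary term when $E$ is irregular. Knowing only that the \emph{total} inner flux equals $2\pi$ and that $u\to 0$ quasi-everywhere on $E$ does not force $\int_{\partial\overline{E}_\epsilon}u\,\partial_n u\,d\sigma\to 0$: the signed measure $\partial_n u\,d\sigma$ need not be non-negative, and even if it were, bounded total mass plus pointwise (q.e.) convergence of the integrand does not control the integral without some uniform integrability. The phrase ``standard capacitary/mollification argument'' hides real work here.

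Two cleaner routes avoid the exhaustion altogether. First, since $\mu_E$ has finite logarithmic energy, $p_{\mu_E}\in W^{1,2}_{\mathrm{loc}}(\mathbb{C})$; you can then integrate by parts directly in the distributional sense on $D(0,1/4)$ to get
\[
\int_{D(0,1/4)}|\nabla u|^2\;=\;\int_{\partial D(0,1/4)}u\,\partial_n u\,d\sigma\;-\;2\pi\int_E u\,d\mu_E,
\]
and the last integral vanishes because $u=0$ $\mu_E$-a.e.\ (the equilibrium measure does not charge polar sets). Since $\int_{D(0,1/4)\setminus E}|f_E|^2\le\int_{D(0,1/4)}|\nabla u|^2$, this closes the argument. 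Second, your ``mollification'' idea can be made precise by approximating $E$ from outside by regular compacta $E_n\downarrow E$ with $\mathrm{Cap}(E_n)\to\mathrm{Cap}(E)$; for each $E_n$ the exhaustion argument is unproblematic, and one passes to the limit using weak-$*$ convergence of equilibrium measures. Either way your main estimate stands.
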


By Lemma \ref{lm:f_E_integral}, if $E\subset{D(0,r)}$ with $0<r<1/4$, we have
\begin{equation}\label{eq:f_E_0}
\int_{D(0,r)\setminus{E}}|f_E|^2\lesssim \log \frac{1}{\mathrm{Cap}(E)}.
\end{equation}
Moreover, by dilatation, we infer from \eqref{eq:measure_dilatation} that
\begin{eqnarray*}
f_{tE}(w)= \int_{tE}\frac{d\mu_{tE}(\zeta)}{w-\zeta} = \int_{tE}\frac{d\mu_{E}(t^{-1}\zeta)}{w-\zeta} = \int_E\frac{d\mu_E(\zeta)}{w-t\zeta} = \frac{f_E(w/t)}{t},\ \ \ \forall\,t>0,
\end{eqnarray*}
and hence
\begin{eqnarray}\label{eq:f_E}
\int_{D(0,r)\setminus{E}}|f_E|^2
 &=& 16r^2\int_{D(0,1/4)\setminus{\frac{1}{4r}E}}|f_E(4rw)|^2\nonumber\\
 &=& \int_{D(0,1/4)\setminus{\frac{1}{4r}E}}|f_{\frac{1}{4r}E}|^2\nonumber\\
 &\lesssim& \log \frac{4r}{\mathrm{Cap}(E)}.
\end{eqnarray}
in view of \eqref{eq:cap_dilatation}. The inequality \eqref{eq:f_E_0} suffices to prove Theorem \ref{th:Bergman_kernel}/(1), while we need the sharper inequality \eqref{eq:f_E} for the second assertion.

\begin{proof}[Proof of Theorem \ref{th:Bergman_kernel}/(1)]
Let $h_1(t)=h_{1,\alpha}(t)=t^\alpha$. Let $w\in\Omega$ be sufficiently close to $\partial\Omega$, and $w'\in\partial\Omega$ such that $|w-w'|=\delta_\Omega(w)$. Take $r>0$ so that $c\cdot{h}_1(r)=8\delta_\Omega(w)$. It follows that from the condition $(U)_{1,\alpha}$ that there exists another point $w''\in\partial\Omega$ with
\[
8\delta_\Omega(w)\leq|w''-w'|\leq{r}.
\]
Let $E_1:=\overline{D(w',\delta_\Omega(w))}\setminus\Omega$. As in \cite{PflugZwonek}, we divide $E_1$ into the following three parts:
\begin{eqnarray*}
E_{11} &=& E_1\cap\left\{w+se^{i\theta}\in\mathbb{C};\ s>0,\ -\frac{\pi}{3}\leq\theta\leq\frac{\pi}{3}\right\},\\
E_{12} &=& E_1\cap\left\{w+se^{i\theta}\in\mathbb{C};\ s>0,\ \frac{\pi}{3}\leq\theta\leq\pi\right\},\\
E_{13} &=& E_1\cap\left\{w+se^{i\theta}\in\mathbb{C};\ s>0,\ \pi\leq\theta\leq\frac{5\pi}{3}\right\},
\end{eqnarray*}
so that
\begin{equation}\label{eq:cos}
\cos(\mathrm{arg}(\zeta-w))\geq\frac{1}{2},\ \ \ \zeta\in{E_{11}}.
\end{equation}
Moreover, by rotating $\Omega$ around $w$, we may assume that $\mathrm{Cap}(E_{11})\geq\mathrm{Cap}(E_{12})\geq\mathrm{Cap}(E_{13})$. Then we infer from \eqref{eq:cap_sum} that
\begin{eqnarray*}
\frac{1}{\log ({1}/{\mathrm{Cap}(E_1)})} &\le& \frac{1}{\log ({1}/{\mathrm{Cap}(E_{11})})}+\frac{1}{\log ({1}/{\mathrm{Cap}(E_{12})})}+\frac{1}{\log ({1}/{\mathrm{Cap}(E_{13})})}\\
&\le& \frac{3}{\log ({1}/{\mathrm{Cap}(E_{11})})}.
\end{eqnarray*}
so that
\begin{equation}\label{eq:log_E_11}
\log \frac{1}{\mathrm{Cap}(E_{11})} \leq 3\log \frac{1}{\mathrm{Cap}(E_1)}
\end{equation}

Set $E_2:=\overline{D(w'',\delta_\Omega(w))}\setminus\Omega$. We have $E_{11},E_2\subset{D(w,2r)}$. Let
\[
f_{11}:=f_{E_1},\ \ \ f_2:=f_{E_2},
\]
be the functions given in \eqref{eq:f_E_def}, and consider $f:=f_{11}-f_2$. Write
\[
\int_{\Omega}|f|^2 = \int_{\Omega\cap D(w,2r)}|f|^2+\int_{\Omega\setminus{D(w,2r)}}|f|^2=:I_1+I_2.
\]
For $I_1$, it follows from \eqref{eq:f_E_0} and \eqref{eq:log_E_11} that
\begin{eqnarray}
I_1 &\lesssim& \int_{\Omega\cap D(w,2r)}\left(|f_{11}|^2+|f_2|^2\right)\nonumber\\
&\le& \int_{D(w,2r)\setminus{E_{11}}}|f_{11}|^2+\int_{D(w,2r)\setminus{E_2}}|f_2|^2\nonumber\\
&\lesssim& \log \frac{1}{\mathrm{Cap}(E_{11})} + \log \frac{1}{\mathrm{Cap}(E_2)}\nonumber\\
&\lesssim& \log \frac{1}{\mathrm{Cap}(E_1)}+\log \frac{1}{\mathrm{Cap}(E_2)}\label{eq:I_1}
\end{eqnarray}
By Theorem \ref{th:weakly_uniform_perfect}/(1), we have
\[
\mathrm{Cap}(E_1)\gtrsim\delta_\Omega(w)^{(2-\alpha)^{-1}},\ \ \ \mathrm{Cap}(E_2)\gtrsim\delta_\Omega(w)^{(2-\alpha)^{-1}}.
\]
so that
\begin{equation}\label{eq:I_1-1}
I_1 \lesssim \log\frac{1}{\delta_\Omega(w)}.
\end{equation}
For $I_2$, straightforward computation gives
\begin{eqnarray*}
I_2 &\leq& \int_0^{2\pi} \int_{2r}^{\infty} \left|\int_{E_{11}}\frac{d\mu_{E_{11}}(\zeta)}{w+se^{i\theta}-\zeta}-\int_{E_{2}}\frac{d\mu_{E_{2}}(\zeta')}{w+se^{i\theta}-\zeta'} \right|^2 sdsd\theta \\
&=& \int_0^{2\pi} \int_{2r}^{\infty}\left| \int_{E_{11}}\int_{E_2}\frac{1}{w+se^{i\theta}-\zeta}-\frac{1}{w+se^{i\theta}-\zeta'} d\mu_{E_{11}}(\zeta)d\mu_{E_2}(\zeta')  \right|^2 sdsd\theta\\
&=& \int_0^{2\pi} \int_{2r}^{\infty}\left| \int_{E_{11}}\int_{E_2}\frac{\zeta-\zeta'}{(w+se^{i\theta}-\zeta)(w+se^{i\theta}-\zeta')} d\mu_{E_{11}}(\zeta)d\mu_{E_2}(\zeta')  \right|^2 sdsd\theta\\
&\le& 2\pi \int_{2r}^{\infty}  \frac{(2r)^2}{|s-r-2\delta_\Omega(w)|^4} sds \\
&=& 8\pi r^2 \int_{r-2\delta_\Omega(w)}^{\infty} \frac{s+r+2\delta_\Omega(w)}{s^4}\mathrm{d}s\\
&\le& 24\pi{r^2} \int_{r/2}^{\infty} \frac{1}{s^3}\mathrm{d}s\\
&\le& 48\pi\\
&<& \infty.
\end{eqnarray*}
This combined with \eqref{eq:I_1-1} yield
\begin{equation}\label{eq:f_L2_thm2}
\int_\Omega|f|^2\lesssim\log\frac{1}{\delta_\Omega(w)}.
\end{equation}

It remains to find a lower bound for $|f(w)|$. Clearly, we have $|f(w)|\geq|f_{11}(w)|-|f_2(w)|$. First, by using \eqref{eq:cos} and noticing that $E_{11}\subset{D(w,2\delta_\Omega(w))}$, we have
\begin{eqnarray*}
|f_{11}(w)|&=&\left| \int_{E_{11}}\frac{d\mu_{K_{11}}(\zeta)}{w-\zeta}  \right|\\
&=& \left| \int_{E_{11}} \frac{\cos(\arg(\zeta-w))}{|w-\zeta|} d\mu_{K_{11}}(\zeta) +i \int_{E_{11}} \frac{\sin(\arg(\zeta-w))}{|w-\zeta|} d\mu_{K_{11}}(\zeta) \right| \\
&\ge&\left| \int_{E_{11}} \frac{\cos(\arg(\zeta-w))}{|w-\zeta|} d\mu_{E_{11}}(\zeta)  \right| \\
&\ge& \frac{1}{2}\int_{E_{11}}\frac{1}{|w-\zeta|}d\mu_{E_{11}}(\zeta)\\
&\ge& \frac{1}{4\delta_\Omega(w)}.
\end{eqnarray*}
On the other hand,
\begin{eqnarray*}
|f_2(w)| &\le& \int_{E_2} \frac{d\mu_{K_2}(\zeta)}{|w-\zeta|}\\
&\le& \int_{E_2} \frac{d\mu_{K_2}(\zeta)}{|w'-w''|-|w-w'|-|w''-\zeta|}\\
&\le& \int_{E_2} \frac{d\mu_{K_2}(\zeta)}{8\delta_\Omega(w)-\delta_\Omega(w)-\delta_\Omega(w)}\\
&\le& \frac{1}{6\delta_\Omega(w)}.
\end{eqnarray*}
Thus $|f(w)|\geq1/(12\delta_\Omega(w))$, which together with \eqref{eq:f_L2_thm2} complete the proof of Theorem \ref{th:Bergman_kernel}/(1).
\end{proof}

\begin{remark}
{\rm If $\Omega\subset\mathbb{C}$ is a bounded domain, Pflug-Zwonek proved that
\[
K_\Omega(w)\gtrsim\gamma_\Omega(w),
\]
where $\gamma_\Omega$ is a potential theoretic function defined by
\[
\gamma_\Omega(w):=\int^{1/4}_0\frac{dr}{r^3\log(1/\mathrm{Cap}(\overline{D(w,r)}\setminus\Omega))}.
\]
Let $a\in\partial\Omega$ with $\delta_\Omega(z)=|w-a|$. We have
\[
\overline{D(w,r)}\setminus\Omega\supset\overline{D(z,r-\delta_\Omega(w))}\setminus\Omega,
\]
so that
\[
\mathrm{Cap}(\overline{D(w,r)}\setminus\Omega)\gtrsim(r-\delta_\Omega(z))^{(2-\alpha)^{-1}},
\]
in view of Theorem \ref{th:weakly_uniform_perfect}/(2) when $\delta_\Omega(w)<r$. Thus
\[
\gamma_\Omega(w) \geq \int^{1/4}_{2\delta_\Omega(w)}\frac{dr}{r^3\log\frac{1}{r-\delta_\Omega(w)}}\geq \frac{1}{\log\frac{1}{\delta_\Omega(w)}}\int^{1/4}_{2\delta_\Omega(w)}\frac{dr}{r^3}\gtrsim \frac{1}{\delta_\Omega(w)^2\log\frac{1}{\delta_\Omega(w)}}.
\]
However, this argument does not yield a sharper lower bound for $K_\Omega$ even if $\Omega$ is uniformly perfect.}
\end{remark}

Next, we modify the proof of Theorem \ref{th:Bergman_kernel}/(1) to prove the second assertion. Replace the function $h_1(t)=t^\alpha$ by $h_2(t)=t(\log(1/t))^{-\beta}$, take $w,w'$ as above, and suppose that $c\cdot{h}_2(r)=8\delta_\Omega(w)$. We proceed as in the previous proof, and the only difference will occur in the upper estimate \eqref{eq:I_1-1} for $I_1$. If $\Omega$ satisfies the condition $(U)_{2,\beta}$, then Theorem \ref{th:weakly_uniform_perfect}/(2) implies that
\begin{equation}\label{eq:cap_thm2}
\mathrm{Cap}(E_1)\gtrsim\delta_\Omega(w)\left(\log\frac{1}{\delta_\Omega(w)}\right)^{-\beta},\ \ \ \mathrm{Cap}(E_2)\gtrsim\delta_\Omega(w)\left(\log\frac{1}{\delta_\Omega(w)}\right)^{-\beta}.
\end{equation}
Moreover, if $g_2$ is the inverse function of $h_2$, then we have
\begin{equation}\label{eq:r_g_2}
r=g_2(8\delta_\Omega(w)/c).
\end{equation}
We need the following lemma.

\begin{lemma}\label{lm:g_2}
$g_2(t) \leq t(\log(1/t))^\beta$ when $t$ is sufficiently small.
\end{lemma}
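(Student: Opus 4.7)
The plan is to reduce the inequality $g_2(t) \le t(\log(1/t))^\beta$ to the simpler assertion $h_2(t(\log(1/t))^\beta) \ge t$, and then estimate $h_2$ directly. Since $h_2(r)=r(\log(1/r))^{-\beta}$ is increasing on $(0,\varepsilon_0)$ for some $\varepsilon_0>0$ (the derivative becomes positive as $r\to 0+$ because the $\log(1/r)^{-\beta}$ factor varies much more slowly than $r$), its inverse $g_2$ is also increasing, and the inequality $g_2(t)\le s$ is equivalent to $t\le h_2(s)$. So it suffices to verify that, with $s:=t(\log(1/t))^\beta$, we have $h_2(s)\ge t$ for $t$ small.

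First I would observe that $s\ge t$ for small $t$, since $(\log(1/t))^\beta\ge 1$ once $t<1/e$ (using $\beta>0$). Therefore $\log(1/s)\le\log(1/t)$, and because $\beta>0$ this reverses under raising to the $-\beta$ power, giving
\[
(\log(1/s))^{-\beta}\ge(\log(1/t))^{-\beta}.
\]
Substituting into the definition of $h_2$,
\[
h_2(s)=s\cdot(\log(1/s))^{-\beta}\ge t(\log(1/t))^\beta\cdot(\log(1/t))^{-\beta}=t,
\]
which is the required inequality. Applying $g_2$ to both sides (using monotonicity of $g_2$) yields $g_2(t)\le s=t(\log(1/t))^\beta$.

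There is essentially no obstacle here; the only point to be careful about is ensuring $s$ still lies in the domain of monotonicity of $h_2$, which is automatic once $t$ is small enough because $s=t(\log(1/t))^\beta\to 0$ as $t\to 0+$. For the same reason, one could sharpen the bound to $g_2(t)\lesssim t(\log(1/t))^\beta$ with an explicit asymptotic $g_2(t)\sim t(\log(1/t))^\beta$ by iterating the identity once more, using $\log(1/s)=\log(1/t)-\beta\log\log(1/t)=\log(1/t)(1+o(1))$; but for the application in \eqref{eq:r_g_2} only the stated one-sided inequality is needed.
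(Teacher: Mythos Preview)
Your proof is correct and follows essentially the same approach as the paper's. The paper substitutes $u=h_2(t)$ and verifies $u(\log(1/u))^\beta\ge g_2(u)$ by an explicit expansion, while you substitute $s=t(\log(1/t))^\beta$ and verify the equivalent inequality $h_2(s)\ge t$ via the monotonicity of $\log$; these are dual presentations of the same one-line computation.
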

\begin{proof}
We have
\begin{eqnarray*}
h_2(t)\left(\log\frac{1}{h_2(t)}\right)^{\beta} &=& t\left(\log\frac{1}{t}\right)^{-\beta}\left(\log\frac{1}{t(\log(1/t))^{-\beta}}\right)^{\beta}\\
&=& t\left(\log\frac{1}{t}\right)^{-\beta}\left(\log \frac{1}{t} +\beta\log\log\frac{1}{t}\right)^\beta\\
&=& t\left(1 +\beta\frac{\log\log(1/t)}{\log(1/t)}\right)^\beta\\
&\ge& t\\
&=& g_2(h_2(t)).
\end{eqnarray*}
Since $h_2(t)$ is increasing when $t$ is sufficiently small, so is $g_2$, and hence $g_2(t) \leq t(\log(1/t))^\beta$ when $0<t\ll1$.
\end{proof}

Therefore, \eqref{eq:r_g_2} implies that
\begin{equation}\label{eq:r_case2}
r\lesssim\delta_\Omega(w)\left(\log\frac{1}{\delta_\Omega(w)}\right)^\beta.
\end{equation}
By using \eqref{eq:f_E} instead of \eqref{eq:f_E_0}, the estimate \eqref{eq:I_1} can be modified to be
\begin{eqnarray*}
I_1 &\lesssim& \int_{D(w,2r)\setminus{E_{11}}}|f_{11}|^2+\int_{D(w,2r)\setminus{E_2}}|f_2|^2\\
&\lesssim& \log \frac{8r}{\mathrm{Cap}(E_{11})} + \log \frac{8r}{\mathrm{Cap}(E_2)}
\end{eqnarray*}
Similarly to \eqref{eq:log_E_11}, we  have
\[
\log \frac{8r}{\mathrm{Cap}(E_{11})} \leq 3\log \frac{8r}{\mathrm{Cap}(E_1)},
\]
so that
\[
I_1 \lesssim \log \frac{8r}{\mathrm{Cap}(E_1)}+\log \frac{8r}{\mathrm{Cap}(E_2)}.
\]
This together with \eqref{eq:cap_thm2} and \eqref{eq:r_case2} yield the following estimate for $I_1$:
\begin{equation}\label{eq:I_1-2}
I_1 \lesssim \log\log\frac{1}{\delta_\Omega(w)}.
\end{equation}
The other parts of the proof of Theorem \ref{th:Bergman_kernel}/(1) can be repeated without change. We leave it to the reader.

{\bf Acknowledgements.} We are grateful to Prof. Bo-Yong Chen for introducing this topic to us and many inspiring discussions.

\end{document}